\documentclass{article}
\textwidth 5.4in
\topmargin -0.2in
\textheight 9.0in
\oddsidemargin 0.5in
\evensidemargin -0.2in

\usepackage{graphicx} 
\graphicspath{{Figures/}}

\usepackage{hyperref}
\usepackage{amssymb}
\usepackage{mathtools}          
\usepackage{stmaryrd}
\usepackage{xstring}           
\usepackage{enumitem}          
\usepackage[f]{esvect} 
\usepackage{xcolor}            
\usepackage{yhmath}  
\usepackage{amsmath,amsthm}

\definecolor{sina}{rgb}{1.0,.0,1.0}         

\definecolor{chris}{rgb}{0, 0., 1}


\newtheorem{lemma}{Lemma}[section]
\newtheorem{proposition}{Proposition}[section]
\newtheorem{corollary}{Corollary}[section]

\newtheorem{remark}{Remark}

\newcommand{\diff}{\,{\rm d}}

\newcommand{\Cay}{\,{\rm Cay}}


\newcommand{\norm}[1]{\lVert#1\rVert}
\newcommand{\R}{\mathbb{R}}
\newcommand{\C}{\mathbb{C}}
\newcommand{\Om}{\Omega}

\title{Commutator-free Cayley methods}
\author{B.~Wembe,  C.~Offen, S.~Maslovskaya, S.~Ober-Blöbaum, P.~Singh}
\date{}

\begin{document}

\maketitle








\begin{abstract}
Differential equations posed on quadratic matrix Lie groups arise in the context of classical mechanics and quantum dynamical systems. Lie group numerical integrators preserve the constants of motions defining the Lie group. Thus, they respect important physical laws of the dynamical system, such as unitarity and energy conservation in the context of quantum dynamical systems, for instance.
In this article we develop a high-order commutator free Lie group integrator for non-autonomous differential equations evolving on quadratic Lie groups. Instead of matrix exponentials, which are expensive to evaluate and need to be approximated by appropriate rational functions in order to preserve the Lie group structure, the proposed method is obtained as a composition of Cayley transforms which naturally respect the structure of quadratic Lie groups while being computationally efficient to evaluate. Unlike Cayley--Magnus methods the method is also free from nested matrix commutators.
\end{abstract}

\paragraph{Keywords.}
Lie group integrators, Cayley transform, Magnus expansion, Non-autonomous, Commutator-free methods.

\maketitle

\section{Introduction}
In this article we are concerned with non-autonomous linear ordinary and partial differential equations,
\begin{equation}
\label{eq:pb}
\dot{Y}(t) = A(t)Y(t), \qquad Y(t_0) = Y_0,  \qquad t \in [t_0, t_f], 
\end{equation}
where the solution $Y(t) \in G$ evolves on Lie groups of the form
\begin{equation}
\label{eq:Lie group}
G = \{X \in \mathrm{GL}_n(\C) \, : \, XJX^* = J\},
\end{equation}
where $\mathrm{GL}_n(\C)$ is the group of $n\times n$ non-singular complex matrices and where $J \in \mathrm{GL}_n(\C)$ is a given matrix (see for instance \cite{BC:2017, CCLMO2022, HLW:2006}). 
The condition $Y(t) \in G$ for all $t$ is fulfilled if and only if $\dot Y(t)=A(t)Y(t)$ is tangential to the Lie group $G$. This is the case if and only if $A(t)$ takes values in the Lie algebra $\mathfrak g = \{ \Omega \in \C^{n \times n} : \Omega J + J \Omega^\ast =0 \}$ to $G$.

Differential equations of this form describe a wide range of physical systems with numerous practical applications. Some well known examples are the \emph{symplectic group} $\mathrm{Sp}_n(\C)$, with $J = \begin{pmatrix}
0 & I \\
-I & 0
\end{pmatrix}$, related to the investigations of Hamiltonian systems, and the \emph{Lorenz group} $\mathrm{SO}_{3,1}(\R) = \mathrm{O}_{3,1}(\R) \cap \mathrm{SL}_4(\R)$, with $J = \mathrm{diag}(1,1,1,-1)$, related to the study of special relativity. 
Another important example is the \emph{unitary group} $\mathrm{U}_n(\C)$, with $J=I$, which is germane to the investigation of quantum systems and has received particular attention over the last decades. Lie groups of this form were termed {\em quadratic} Lie groups in \cite{Iserles:2001}. 

In the design of numerical schemes for \eqref{eq:pb}, often a key requirement is that the approximate solution respects the conservation laws of the underlying physical systems.
However, preservation of the complex quadratic invariant of motion $AJA^\ast - J = 0$ can be challenging: If $J=I$ (unitary group), for instance, then Gauss--Legendre collocation methods are the only Runge--Kutta methods that preserve the invariant \cite{Dieci1994}. 
Rather than seeking an integrator that can preserve the conserved quantities defining $G$, which broadly defines the field of geometric numerical integration \cite{HLW:2006, BC:2017}, Lie group methods  \cite{IMNZ:2000} are a narrower class of methods that exploit the intrinsic geometric Lie group structure. By ensuring that the approximate solution evolves on the Lie group, these methods ensure that the symmetries of the system and the corresponding conservation laws are respected.

Concerns such as computational accuracy, processing time, and memory usage, which motivate algorithm development for ODEs and PDEs more broadly, also remain of paramount importance when solving ODEs and PDEs on Lie groups such as \eqref{eq:pb}. Quantum optimal control algorithms \cite{BTP1983, BCR2010, FP2021, GBCKKK2015, VRTGS2016} 
for instance,  require repeated integration of the underlying quantum differential systems, which needs to be fast and accurate while preserving the geometric properties of the system such as unitarity and conservation of energy. 
The aim of this work is to develop numerical integrators for non-autonomous linear differential equations of the form \eqref{eq:pb} which, in a context such as that of quantum optimal control, are accurate and fast, while ensuring that the numerical solution evolves on the Lie group \eqref{eq:Lie group}. 

A prominent tool in the design of Lie group methods for solving \eqref{eq:pb} has been the Magnus expansion \cite{IMNZ:2000, Magnus1954}. 
The general idea of the Magnus approach is to {write the solution as $Y(t) = \mathrm{e}^{\Om(t)}Y_0$ and expand $\Om(t)$ into an infinite series, 
\[
\Om(t) = \int_0^t A(t_1) \diff t_1 + \frac{1}{2} \int_0^t \int_0^{t_1} [A(t_1), A(t_2)] \diff t_2 \diff t_1 + \cdots,
\]
involving nested commutators of $A(t)$ at different times.
Truncating the series at a specified order and computing its exponential yields an approximation of the solution to that order.

The resulting numerical schemes, although very accurate, face many practical difficulties due to the presence of nested commutators in the Magnus expansion. The prior computation of these commutators can be very expensive, the number of commutators increases rapidly with the order of accuracy required \cite{MO1999}, they reduce sparsity \cite{ADHHJ2022} and can alter the structure sufficiently to make Magnus based scheme infeasible without substantial alterations \cite{CFBS2023}.

A very versatile technique for overcoming this difficulty is presented by the so-called  {\em commutator-free} or {\em quasi-Magnus} methods \cite{AF:2011, BM2006, CCLMO2022}.
While the derivation of these methods also starts from the Magnus expansion, they utilise the Baker-Campbell-Hausdorff (BCH) formula \cite{Hall2003, Oteo1991} to approximate the exponential of the Magnus expansion by a product of multiple exponentials, 
\begin{equation}
\label{eq:comm free form}
\mathrm{e}^{\Om(t)}\ \approx\ \mathrm{e}^{S_1}\mathrm{e}^{S_2}\ldots \mathrm{e}^{S_K},
\end{equation}
where each exponent has a much simpler form specifically, the exponents $S_k$ features no commutators, while achieving the same accuracy, see for instance \cite{AF:2011, BM2006} where  $S_k$ are obtained as linear combinations or integrals of $A(t)$. 

Another crucial bottleneck in Magnus based methods as well as their commutator-free counterparts is the computation of the matrix exponential, which can be prohibitively expensive \cite{MV}. While Krylov subspace methods lead to very efficient Magnus--Lanczos solvers for small time-steps \cite{IKP, KHK2008}, polynomial approximations do not respect the Lie group structure of \eqref{eq:pb}. Where geometric numerical integration is required, rational approximations to the exponential must be utilised instead \cite{JS2023}. 

The most well known among rational approximants to the exponential are degree $(n,n)$ (i.e. diagonal) Pad\'{e} approximants. 
The $(1,1)$ Pad\'{e} approximant, called the {\em Cayley transform}, preserves the mapping from Lie algebra to the Lie group for quadratic Lie groups of the form \eqref{eq:Lie group}, and thus is suitable for applications to \eqref{eq:pb}. It leads to the well known Crank--Nicholson method, which is a second order method. The fourth-order Magnus expansion as well as fourth-order commutator-free methods require each exponential, $\mathrm{e}^{\Om(t)}$ or $\mathrm{e}^{S_k}$ in \eqref{eq:comm free form}, to be computed with the degree $(2,2)$ Pad\'{e} approximant, while sixth-order methods need to be paired with the degree $(3,3)$ Pad\'{e} approximant. Since a degree $(n,n)$ approximant involves $n$ linear equations to solve, the requirement of high-order rational approximants leads to an $n$-fold increase in the cost of the overall scheme.

Keeping the eventual approximation of the exponential by a rational function in mind, {\em Cayley--Magnus} methods \cite{DLP:1998, Iserles:2001} develop an alternative to the Magnus series by seeking an expansion whose Cayley transform directly provides a high-order approximation to the solution of \eqref{eq:pb}. Since the Cayley transform is a degree $(1,1)$ rational method, this approach circumvents the $n$-fold scaling of traditional Magnus based approaches. However, much like the Magnus expansion, this new expansion called the {\em Cayley--Magnus expansion} also features commutators, and its application involves very similar challenges due to their presence. 
To the best of our knowledge, there is no commutator-free alternative for the Cayley--Magnus methods.

In this work we propose a new approach which combines the commutator-free approach with Cayley--Magnus expansion to derive high-order schemes which avoid both, nested commutators and exponential matrix computations. The resulting schemes have close parallels to \eqref{eq:comm free form}, with the exponentials being replaced by the significantly cheaper Cayley transforms. The schemes respect the Lie group structure of \eqref{eq:pb} for quadratic Lie groups of the form \eqref{eq:Lie group} by design. 

The rest of the article is organized as follows. In Section~\ref{sec:Cayley-transform} we introduce some notations and definitions. Moreover, we recall results on Cayley--Magnus and Legendre expansions that we will need to build a fourth-order scheme based on the Cayley transform. Section~\ref{sec:Cayley-commutator-free} is the core of the paper: We first present a variant of the Cayley--BCH formula derived up to order four; the Cayley-BCH expansion is then used to derive a new fourth-order commutator-free Lie group integrator for quadratic Lie groups such as $\mathrm{SO}_n(\R)$ or $\mathrm{U}_n(\C)$. Section~\ref{sec:Examples} contains numerical experiments that demonstrate the effectiveness of the proposed approach.

\section{Preparation}
\label{sec:Cayley-transform}
%
\subsection{Cayley transforms for quadratic Lie groups}
Consider the matrix differential equation
\begin{equation}
\label{eq:initial_system}
\dot{Y}(t) = A(t)Y(t), \quad Y(t_0) = Y_0, \quad Y(t) \in G, \quad t \in [t_0, t_f],
\end{equation}
where $A(\cdot)$ is a Lipschitz-continuous operator taking its values in $\mathfrak g$.
If $G$ is a Lie group and $\mathfrak g$ its Lie algebra, then the motions evolves on $G$, i.e.\ $Y(t) \in G$ for all $t$, provided that $A(\cdot)$ takes values in $\mathfrak g$ and $Y_0 \in G$. 
The article focuses on numerical methods for differential equations on {\em quadratic Lie groups} which are matrix Lie groups of the form
\begin{equation}
\label{eq:Lie-group}
G = \{M \in \mathrm{GL}_n(\C) \, : \, MJM^* = J\},
\end{equation}
for an invertible matrix $J \in \mathrm{GL}_n(\C)$. Here $M^* = \bar{M}^\top$ denotes the conjugate transpose.
The Lie algebra to $G$ is given as
\[
\mathfrak g = \{ \Omega \in \C^{n \times n} \, : \, \Omega J + J \Omega^\ast =0 \}.
\]

An important example of \eqref{eq:Lie-group} is the unitary group $\mathrm{U}_n(\C)$, with $J = I$ (where $I$ is the identity matrix), which occurs in the context of quantum systems. Its Lie algebra consists of skew-hermitian matrices. Another example is the (complex) symplectic group $\mathrm{Sp}_{2n}(\C)$, with $J=\begin{pmatrix}
0 & I \\
-I & 0
\end{pmatrix}$. Its Lie algebra consists of Hamiltonian matrices. 

Let $c \in \C \setminus \{0\}$. For $\Om \in \C^{n \times n}$ with $c^{-1} \notin \sigma(\Om)$ the $c$-Cayley transform of $\Om$ is defined as
\[
\Cay(\Om, c) = \left(I - c \Om\right)^{-1}  \left(I + c^\ast \Om \right).
\]
Here $\sigma(M)$ denotes the spectrum of $M$. 
The inverse $c$-Cayley transform is given as
\[
\mathrm{Cay}^{-1}(A,c) = -\frac 1 {c^\ast} \left(I+\frac{c}{c^\ast}A\right)^{-1}(I-A) \quad \text{for } -\frac{c^\ast}{c} \not \in \sigma(A).
\]
Indeed, the $c$-Cayley transform constitutes a diffeomorphism $\Cay_{c} \colon \tilde{\mathfrak g} \stackrel{\sim}{\longrightarrow} \widetilde G$
between $\tilde{\mathfrak g} = \{ \Omega \in \mathfrak g  : c^{-1} \notin \sigma(\Omega) \}$ and $\widetilde G =  \{ A \in G : -\frac{c^\ast}{c} \notin \sigma (A) \}$. 

Since Cayley transforms respect the Lie group structure for quadratic Lie groups, so do their products, i.e. the product $\Cay(\Om,c_1)\Cay(\Om,c_2)\ldots\Cay(\Om,c_n)$ resides in the Lie group $\widetilde{G} = \{ A \in G : -\frac{c_k^\ast}{c_k} \notin \sigma (A),\, k=1,\ldots,n\}$, provided $\Om \in \widetilde{\mathfrak g} = \{ \Omega \in \mathfrak g  : c_k^{-1} \notin \sigma(\Omega),\ k=1,\ldots,n \}$. Thus Cayley transforms are natural building blocks for rational approximations that respect quadratic Lie groups. Indeed, all unitary rational approximations (relevant to quantum dynamics, where $G=\mathrm{U}_n(\C)$), including higher-order diagonal Pad\'{e} approximations, can be obtained as compositions of Cayley transforms \cite{JS2023}.


We will use the Cayley transform as a cheap alternative to the surjective matrix exponential $\exp \colon \mathfrak g \rightarrow G$ to design Lie group integrators. 
Other approaches employing the Cayley transform to solve system \eqref{eq:initial_system} can be found for example in \cite{Iserles:2001, MB:2001}. These approaches are generally based on the following result.

In the following, the $1/2$-Cayley transform will simply be referred to as {\em Cayley transform} and will be denoted by $\Cay(A)$.
\begin{lemma}
\label{lemma:1}
Let $Y(t)$ be the solution of system \eqref{eq:initial_system}, with $-1 \notin \sigma(Y(t)Y_0^{-1})$ for any $t \in [t_0,t_f]$,
then $Y(t)$ can be written in the form $Y(t) = \Cay(\Om(t))Y_0$, where the matrix $\Om \in \tilde{\mathfrak g}$ satisfies 
\begin{equation}
\label{eq:skew-Herm-prob}
\dot{\Om}(t) = A(t) - \frac{1}{2}[\Om, A(t)] - \frac{1}{4}\Om A(t) \Om, \quad \Om(t_0) = \Om_0, 
\end{equation}
with the Lie bracket (commutator) of two matrices $A$ and $B$ defined by $[A,B] = A\cdot B - B\cdot A$.
\end{lemma}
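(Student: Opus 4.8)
The plan is to establish the claim in the natural forward direction: assume $Y(t)$ solves \eqref{eq:initial_system}, construct a candidate $\Om(t)$ via the inverse Cayley transform, and then differentiate to recover \eqref{eq:skew-Herm-prob}. First I would pass to the relative flow $Z(t) := Y(t)Y_0^{-1}$. Since $Y_0$ is constant, $Z$ solves $\dot Z = A(t)Z$ with $Z(t_0) = I$. Because $Y_0, Y(t) \in G$ and $G$ is a group, $Z(t) \in G$, and the hypothesis $-1 \notin \sigma(Z(t))$ places $Z(t)$ in $\widetilde G$ (recall that $-c^\ast/c = -1$ for $c = 1/2$). Invoking the diffeomorphism property of $\Cay_{1/2}\colon \tilde{\mathfrak g} \to \widetilde G$ established above, I define $\Om(t) := \mathrm{Cay}^{-1}(Z(t), 1/2)$; this automatically lies in $\tilde{\mathfrak g}$, so the membership $\Om \in \tilde{\mathfrak g}$ (in particular $\Om J + J\Om^\ast = 0$) is inherited for free rather than needing a separate verification, and it depends differentiably on $t$ since $Z$ does and the inverse transform is smooth. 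Evaluating at $t_0$ gives $\Cay(\Om(t_0)) = I$, hence $\Om_0 = \Om(t_0) = 0$.

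The core of the argument is the differentiation of $Z = \Cay(\Om) = P^{-1}Q$ with $P := I - \tfrac12\Om$ and $Q := I + \tfrac12\Om$. The single observation that makes everything collapse is that $P$ and $Q$ are affine in $\Om$ and therefore commute, as do their inverses. Using $\frac{d}{dt}P^{-1} = -P^{-1}\dot P P^{-1} = \tfrac12 P^{-1}\dot\Om P^{-1}$ together with $\dot Q = \tfrac12\dot\Om$, I obtain $\dot Z = \tfrac12 P^{-1}\dot\Om\,(P^{-1}Q + I)$. The identity $P^{-1}Q + I = P^{-1}(Q + P) = 2P^{-1}$, valid because $P + Q = 2I$, then yields the clean form $\dot Z = P^{-1}\dot\Om\, P^{-1}$.

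Substituting $\dot Z = A Z = A P^{-1}Q$ gives $P^{-1}\dot\Om\, P^{-1} = AP^{-1}Q$; multiplying by $P$ on both sides and using the commutativity of $P$ and $Q$ to reduce $P^{-1}QP = Q$ leaves $\dot\Om = PAQ = (I - \tfrac12\Om)\,A\,(I + \tfrac12\Om)$. Expanding this product and grouping $\tfrac12(A\Om - \Om A) = -\tfrac12[\Om, A]$ reproduces \eqref{eq:skew-Herm-prob} exactly, with the quadratic term $-\tfrac14\Om A\Om$ appearing from the outer factors.

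I expect the genuinely delicate points to be matters of bookkeeping rather than of concept. The differentiability of $\Om(t)$ rides entirely on the diffeomorphism statement already in hand, and the only place where care is truly needed is in verifying that $P$ and $Q$ commute so that the inverse-and-product manipulations and the simplification $P^{-1}QP = Q$ are legitimate; everything else is routine matrix calculus. The mild structural subtlety worth flagging explicitly is that $Z = Y Y_0^{-1}$, not $Y$ itself, is the object on which the Cayley transform acts, so one must record that $Z \in \widetilde G$ before appealing to the inverse transform.
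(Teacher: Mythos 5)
Your proposal is correct and takes essentially the same route as the paper: reduce to the identity initial condition (you via $Z = YY_0^{-1}$, the paper by assuming $Y_0 = I$ WLOG), differentiate the Cayley relation, and use the identity $P+Q = 2I$ to solve for $\dot\Om$, arriving at the same key formula $\dot\Om = \left(I - \tfrac{\Om}{2}\right)A\left(I + \tfrac{\Om}{2}\right)$ before expanding into \eqref{eq:skew-Herm-prob}. The only differences are bookkeeping --- you differentiate the explicit form $Z = P^{-1}Q$ where the paper differentiates the implicit relation $\left(I-\tfrac{\Om}{2}\right)Y = I + \tfrac{\Om}{2}$ --- and that you are somewhat more careful than the paper in justifying the existence, smoothness, and Lie-algebra membership of $\Om(t)$ via the inverse Cayley diffeomorphism, which is a welcome addition rather than a deviation.
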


\begin{proof}
To simplify the notations and without loosing any generality, we consider $Y_0 = I$. 
So,
$Y = \Cay(\Om) ~ \Longrightarrow ~ \left(I - \frac{\Om}{2}\right) Y = \left(I + \frac{\Om}{2}\right)$. Differentiation of this relation leads to
\begin{equation}
\label{eq:proof-lemma-11}
\begin{aligned}
- \frac{\dot\Om}{2} Y + \left(I - \frac{\Om}{2}\right) \dot Y  = \frac{\dot\Om}{2} \quad
\text{i.e} \quad \dot \Om &= 2 \left(I - \frac{\Om}{2} \right) \dot Y  \left(I + Y\right)^{-1} \\
\text{i.e} \quad \dot \Omega &= 2 \left(I - \frac{\Om}{2}\right) A(t)Y \left(I + Y \right)^{-1} 
\end{aligned}
\end{equation}
Moreover,
\begin{equation}
\label{eq:proof-lemma-12}
\begin{aligned}
\left(I + Y \right) Y^{-1} &= \left(I +  \left(I - \frac{\Om}{2} \right)^{-1}  \left(I + \frac{\Om}{2} \right) \right)\left(I + \frac{\Om}{2} \right)^{-1}  \left(I - \frac{\Om}{2} \right)\\
&= \left(I + \frac{\Om}{2} \right)^{-1} \left(I - \frac{\Om}{2} \right) + I \\
&= \left(I + \frac{\Om}{2} \right)^{-1}\left(I - \frac{\Om}{2} + I + \frac{\Om}{2} \right) = 2 \left(I + \frac{\Om}{2} \right)^{-1}
\end{aligned}
\end{equation}
Plugging in relation~\eqref{eq:proof-lemma-12} into equation~\eqref{eq:proof-lemma-11}, one obtains
$
\dot\Omega = \left(I - \frac{\Om}{2}\right) A(t) \left(I + \frac{\Om}{2} \right) 
$
which allows us to conclude \eqref{eq:skew-Herm-prob}.
\end{proof}

\begin{remark}
In numerical time-stepping methods, the time step $\delta t$ can always be made sufficiently small such that $\Omega(t)$ is close to the zero matrix such that $\Omega(t) \in \tilde{\mathfrak{g}}(\mathbb C)$, i.e.\ $2 \not \in \sigma(\Omega(t))$ for all $t \in [t_0,t_0 + \delta t]$. While this could force small time-steps in a general setting,
in the context of quantum systems, one has $\tilde{\mathfrak g} = \tilde{\mathfrak u}_n(\C) = \mathfrak u_n(\C) = \mathfrak g$ since the solution evolves on the unitary matrix group, so that the condition $\Om \in \tilde{\mathfrak u}_n(\C)$ is automatically satisfied.

\end{remark}
According to Lemma~\ref{lemma:1}, solving system~\eqref{eq:initial_system} is equivalent to solving system~\eqref{eq:skew-Herm-prob}, but now considering time-stepping with a small time step $\delta t$ in order to guarantee the existence of $\Om \in \tilde{\mathfrak g}$.
As the Lie algebra is characterised by the linear constraint $AJ+JA^\ast =0$, we can apply any Runge--Kutta method to~\eqref{eq:skew-Herm-prob} and obtain a Lie group structure preserving integrator, since Runge--Kutta methods preserve linear constraints. This approach is suggested in \cite[IV.8.3]{HLW:2006}, for instance. However, this involves the repeated computation of matrix commutators, which can be costly in high dimensions. 
So next, we focus on system \eqref{eq:skew-Herm-prob} and present the Magnus expansion for Cayley transform, developed by Iserles \cite{Iserles:2001}, which will be used later to derive our methods.

\subsection{The Cayley--Magnus expansion}

We now focus on system \eqref{eq:skew-Herm-prob}
given in Lemma~\ref{lemma:1} i.e
\begin{equation}
\tag{3}
\dot{\Om}(t) = A(t) - \frac{1}{2}[\Om, A(t)] - \frac{1}{4}\Om A(t) \Om, \quad \Om(t_0) = \Om_0. 
\end{equation}
In this section we recall the results of \cite{Iserles:2001} for expanding $\Om$ as a Cayley--Magnus series. For this, let $\Omega(t)$ denote the solution of \eqref{eq:skew-Herm-prob} to the initial value $\Omega(0) =0$. We consider as an ansatz the formal series
\begin{equation}
\label{eq:Cayley-expension}
\Om(t) = \sum_{m=1}^\infty \Om_m(t),
\end{equation}
where $\Omega_m(t)$ denotes an expression consisting of $m$ iterated integrals over polynomials of degree $m$ in $A$.
Substituting this into \eqref{eq:skew-Herm-prob} and integrating over $[0,t]$ with $\Omega(0)=0$ leads to
\[
\begin{aligned}
\sum_{m=1}^\infty  \Omega_m  &= \int_0^t A(\xi) \diff \xi - \frac{1}{2} \int_0^t \left[ \sum_{m=1}^\infty \Om_m(\xi), A(\xi) \right] \diff \xi \\
&\hspace{4cm} - \frac{1}{4} \int_0^t \left[ \sum_{m=1}^\infty \Om_m(\xi) \right] A(\xi) \left[ \sum_{m=1}^\infty \Om_m(\xi) \right] \diff \xi \\
&=  \int_0^t A(\xi) \diff \xi - \frac{1}{2} \int_0^t \sum_{m=2}^\infty [ \Om_{m-1}(\xi), A(\xi) ] \diff \xi \\
&\hspace{4cm} - \frac{1}{4} \sum_{m=3}^\infty \sum_{k=1}^{m-2} \int_0^t \Om_{m-k-1}(\xi)  A(\xi) \Om_k(\xi) \diff \xi \\
&=  \int_0^t A(\xi) \diff \xi - \frac{1}{2} \int_0^t [\Omega_1(\xi), A(\xi)] \diff \xi \\
&~~~~~~ - \sum_{m=3}^\infty \left[\frac{1}{2} \int_0^t \left[\Om_{m-1}(\xi), A(\xi)\right] \diff \xi - \frac{1}{4} \sum_{k=1}^{m-2} \int_0^t \Om_{m-k-1}(\xi)  A(\xi) \Om_k(\xi) \diff \xi \right].
\end{aligned}
\]
Now $\Omega_j$ can be determined recursively as follows:
\begin{equation*}
\begin{aligned}
\Om_1(t) &= \int_0^t A(\xi) \diff \xi,\\
\Om_2(t) &= - \frac{1}{2} \int_0^t [\Omega_1(\xi), A(\xi)] \diff \xi,\\
\Om_m(t) &= \frac{1}{2} \int_0^t [\Om_{m-1}(\xi), A(\xi)] \diff \xi - \frac{1}{4} \sum_{k=1}^{m-2} \int_0^t \Om_{m-k-1}(\xi)  A(\xi) \Om_k(\xi) \diff \xi, ~ \text{for} ~ m \geq 3.
\end{aligned}
\end{equation*}
For a combinatorical description of the expressions $\Omega_m$ using the language of trees and for a discussion of convergence properties of the series $\sum_{m=1}^\infty \Omega_m$ we refer to \cite{Iserles:2001}.
The first terms of this expansion are explicitly given by
\[
\begin{aligned}
\Om_1(t) &= \int_0^t A(\xi) \diff \xi, \quad \quad \quad 
\Om_2(t) = - \frac{1}{2} \int_0^t \int_0^{\xi_1} [A(\xi_2), A(\xi_1)] \diff \xi_2 \diff \xi_1,  \\
\Om_3(t) &= \frac{1}{4} \int_0^t \int_0^{\xi_1} \int_0^{\xi_2} \left[ [A(\xi_3), A(\xi_2)], A(\xi_1)\right] \diff \xi_3 \diff \xi_2 \diff \xi_1 \\
&\quad\quad - \frac{1}{4} \int_0^t \left[\int_0^{\xi_1} A(\xi_2) \diff \xi_2 \right] A(\xi_1) \left[\int_0^{\xi_1} A(\xi_2) \diff \xi_2 \right] \diff \xi_1.
\end{aligned}
\]

\begin{lemma}
Truncating the Cayley--Magnus expansion \eqref{eq:Cayley-expension} at a given order $p$, i.e.~setting
\begin{equation}
\label{eq:Cayley-truncate-expension}
\Om(t) \approx \Om_p(t) = \sum_{m=1}^{p} \Om_p,
\end{equation}
leads to a $p$-order approximation.
\end{lemma}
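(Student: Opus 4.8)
The plan is to prove the claim in three stages: first establish that the $m$-th term $\Om_m$ of the Cayley--Magnus series scales like the $m$-th power of the step size, then deduce that truncating after $p$ terms incurs an error of size $O(h^{p+1})$ in the exponent $\Om$, and finally transport this estimate to the solution $Y$ through the smoothness of the Cayley transform, so that the resulting one-step method has local error $O(h^{p+1})$ and hence global order $p$. Throughout I would restrict to a single step on $[t_0,t_0+h]$ with $\Om(t_0)=0$, as in the series ansatz \eqref{eq:Cayley-expension}, and write $M=\sup_{[t_0,t_0+h]}\norm{A(\cdot)}$.

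\emph{Step 1 (homogeneity in the step size).} I would show by induction on $m$ that $\norm{\Om_m(t_0+h)}\le C_m(Mh)^m$ for constants $C_m$ independent of $h$. The base cases are immediate: $\Om_1=\int A$ is bounded by $Mh$, and $\Om_2=-\tfrac12\int[\Om_1,A]$ is bounded by $M^2h^2$. For the inductive step I would feed the bounds for $\Om_1,\dots,\Om_{m-1}$ into the recursion defining $\Om_m$; since each summand there is an iterated integral of a degree-$m$ monomial in $A$ assembled from lower-order terms, and each of the $m$ nested integrations over an interval of length $\le h$ contributes one factor of $h$, the claimed $O(h^m)$ bound follows. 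This is precisely the statement that $\Om_m$ is homogeneous of degree $m$ in the step size, which is the structural reason the expansion is graded by order.

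\emph{Steps 2 and 3 (tail estimate, transport, and conclusion).} Writing the truncation as $\Phi_p(t)=\sum_{m=1}^p\Om_m(t)$, the exponent error is the tail $\Om(t_0+h)-\Phi_p(t_0+h)=\sum_{m=p+1}^\infty\Om_m(t_0+h)$. Invoking the convergence of the Cayley--Magnus series from \cite{Iserles:2001} together with Step~1, for $Mh$ below the radius of convergence the tail is dominated geometrically by its leading term, so $\norm{\Om(t_0+h)-\Phi_p(t_0+h)}=O(h^{p+1})$. By Lemma~\ref{lemma:1} the exact solution satisfies $Y(t_0+h)=\Cay(\Om(t_0+h))Y_0$ for $h$ small enough that $2\notin\sigma(\Om)$ (cf.\ the preceding Remark), while the scheme produces $\Cay(\Phi_p(t_0+h))Y_0$. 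Since $\Cay(\Om)=(I-\Om/2)^{-1}(I+\Om/2)$ is real-analytic near $\Om=0$ with $\Cay(0)=I$, it is Lipschitz on a neighbourhood of the origin; as both $\Om(t_0+h)$ and $\Phi_p(t_0+h)$ are $O(h)$, I would bound $\norm{\Cay(\Om(t_0+h))-\Cay(\Phi_p(t_0+h))}\le L\,\norm{\Om(t_0+h)-\Phi_p(t_0+h)}=O(h^{p+1})$, giving local error $O(h^{p+1})$. A standard local-to-global argument (Gronwall / Lady Windermere's fan) then upgrades this consistency estimate to global order $p$.

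The main obstacle I anticipate is the tail estimate of Step~2: the bounds of Step~1 alone only control each individual term, and controlling the \emph{infinite} sum uniformly in $h$ genuinely requires the quantitative convergence estimate for the Cayley--Magnus series, which I would import from \cite{Iserles:2001}. A secondary technical point is that the Lipschitz constant $L$ and the constants $C_m$ must be chosen uniformly in $t_0$ over $[t_0,t_f]$ for the global statement; this follows from the assumed boundedness and Lipschitz continuity of $A$.
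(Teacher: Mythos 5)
Your proof is correct, but it takes a genuinely different route from the paper for the simple reason that the paper offers no proof at all: the lemma is stated as a recollection of known results, with the convergence of the series $\sum_{m}\Om_m$ delegated to \cite{Iserles:2001} and the transfer of the approximation order from $\Om$ to $Y$ (i.e.\ $Y(t)=\Cay(\Om_p(t))Y_0+O(t^{p+1})$) delegated to \cite{DLP:1998} in the remark immediately following the lemma. Your three steps reconstruct precisely what those citations conceal. Step 1 (the inductive bound $\norm{\Om_m(t_0+h)}\le C_m(Mh)^m$ read off from the recursion defining $\Om_m$) and Step 3 (local Lipschitz continuity of $\Cay$ near the origin, valid since both exponents are $O(h)$, followed by the standard local-to-global argument) are complete and correct. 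Step 2 is the only place where you, like the paper, must import something: as you rightly flag, term-wise bounds do not control the infinite tail unless the constants $C_m$ grow at most geometrically, and that quantitative statement is taken from the convergence analysis in \cite{Iserles:2001}. What the paper's approach buys is brevity---the result is not new, and the references carry the burden. What yours buys is a self-contained account that isolates exactly which external input is genuinely needed; in fact that last dependency could be removed, since your own induction gives $C_m\le\frac{1}{m}\bigl(C_{m-1}+\frac{1}{4}\sum_{k=1}^{m-2}C_{m-k-1}C_k\bigr)$, and the majorant generating function $f(x)=\sum_{m\ge 1} D_m x^m$ associated with this quadratic recurrence satisfies $f=x+xf+\frac{1}{4}xf^2$, hence is analytic at $0$ with positive radius of convergence; this yields a geometric bound $C_m\le KR^m$ directly and would make the whole argument independent of \cite{Iserles:2001}.
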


\begin{remark}
Only a few integral terms in each $\Om_m$ are relevant to obtain a $p$-order approximation. Thus, by considering only relevant terms, we can considerably reduce the number of terms to be computed (there are fewer terms than in the exponential Magnus expansion, see again \cite{Iserles:2001} for more explanation). Moreover, an approximation of order-p for $\Om(t)$ will lead to an approximation of order-p for $Y(t)$, i.e. $Y(t) = \Cay(\Om_p(t))Y_0 + O(t^{p+1})$ (see for instance \cite{DLP:1998}). 
\end{remark}

\subsection{Cayley--Magnus and Legendre expansion}

A starting point of deriving a commutator-free higher-order scheme is to consider a Legendre expansion of $A(\cdot)$, following the strategy in \cite{AF:2011}. We will first introduce the Legendre expansion of the matrix $A$ and expand $\Om$ in terms of the Legendre expansion. The shifted Legendre polynomials $P_n(x)$ are defined for $n=0,1,2,\cdots$ through the recurrence
\begin{equation}
\label{eq:Legendre-polynomial}
P_0(x) = 1, ~~ P_1(x) = 2x-1, ~~ P_{n+1}(x) = \frac{2n+1}{n+1} P_n(x) - \frac{n}{n+1} P_{n-1}(x). 
\end{equation}
By definition, $P_n(x)$ is a polynomial of degree $n$, symmetric with respect to $1/2$. The first terms are explicitly given by
\small{
\begin{equation*}
P_2(x) = 6x^2 - 6x + 1, ~~ P_3(x) = 20x^3 - 30x^2 + 12x - 1, ~~ P_4(x) = 70x^4 - 140x^3 + 90x^2 - 20x + 1.
\end{equation*}}
For a given time step $\delta t$, the matrix $A$ can be expanded on the interval $[0, \delta t]$ in a series of Legendre polynomials given by (see also \cite{AF:2011})
\begin{equation}
\label{eq:Legendre-transform-of-A}
A(t) = \frac{1}{\delta t} \sum_{k=1}^N A_k P_{k-1}\left(\frac{t}{\delta t}\right) + O(\delta t^{N+1}), \quad (0\leq t \leq \delta t)
\end{equation}
where $P_k$, $k=0,1,2,\ldots$ are the Legendre polynomials defined in \eqref{eq:Legendre-polynomial} and where the coefficients $A_k$ are given by
\[
A_k = (2k-1) \int_0^{\delta t} A(t) P_{n-1}\left(\frac{t}{\delta t} \right) \diff t = (2k-1)\delta t \int_0^1 A(x\delta t) P_{n-1}(x) \diff x.
\]
Plugging the Legendre expansion \eqref{eq:Legendre-transform-of-A} into the Cayley Magnus expansion, one can express $\Om(t)$ with respect to the coefficients $A_k$. The first three terms read 
\[
\begin{aligned}
\Om_1(\delta t) &= \frac{1}{\delta t} \sum_{n=1}^N \int_0^{\delta t} A_n P_{n-1}\left(\frac{\xi}{\delta t}\right) \diff \xi + O(\delta t ^{N+1}) \\
&= \sum_{n=1}^N \left\{\int_0^{1} P_{n-1}(x) \diff x \right\} A_n + O(\delta t ^{N+1}) \\
&= A_1, ~~ \text{since $P_k$ ($k\geq 1$) is anti-symmetric w.r.t $1/2$ and} ~ \int_0^{1} P_{0}(x) \diff x = 1, \\
\end{aligned}
\]
\[
\begin{aligned}
\Om_2(\delta t) &= -\frac{1}{2} \int_0^1 \int_0^{\xi_1} [A(\xi_2), A(\xi_1)] \diff \xi_2 \diff \xi_1 \\
&= -\frac{1}{2 \delta t^2} \int_0^1 \int_0^{\xi_1} \left[\sum_{n=1}^N A_n P_{n-1}\left(\frac{\xi_2}{\delta t}\right), \sum_{k=1}^N A_k P_{k-1}\left(\frac{\xi_1}{\delta t}\right) \right] \diff \xi_2 \diff \xi_1 \\
&= -\frac{1}{2} \sum_{n,k=1}^N \left\{ \int_0^{1} \int_0^{x_1} P_{n-1}(x_2)P_{k-1}(x_1) \diff x_2 \diff x_1 \right\} [A_n, A_k] + O(\delta t ^{N+1}), \\
\end{aligned}
\]
\[
\begin{aligned}
\Om_3(\delta t) &= \frac{1}{4} \int_0^t \int_0^{\xi_1} \int_0^{\xi_2} \left[ [A(\xi_3), A(\xi_2)], A(\xi_1)\right] \diff \xi_3 \diff \xi_2 \diff \xi_1 \\
&\quad\quad - \frac{1}{4} \int_0^{\delta t} \left\{\int_0^{\xi_1} A(\xi_2) \diff \xi_2 \right\} A(\xi_1) \left\{\int_0^{\xi_1} A(\xi_2) \diff \xi_2 \right\} \diff \xi_1 \\
&= \frac{1}{4} \sum_{n,m,k=1}^N \left\{ \int_0^{1} \int_0^{x_1} \int_0^{x_2} P_{n-1}(x_3)P_{m-1}(x_2)P_{k-1}(x_1) \diff x_3 \diff x_2 \diff x_1 \right\} \cdot \\ &\hspace{8.5cm} \left[ [A_n, A_m], A_k \right] \\
& - \frac{1}{4} \sum_{n,m,k=1}^N \left( \int_0^1 \left\{\int_0^{x_1} P_{n-1}(x_2) \diff x_2 \right\} P_{m-1}(x_1) \left\{\int_0^{x_1} P_{k-1}(x_2) \diff x_2 \right\} \diff x_1 \right) \cdot \\
& \hspace{8cm} (A_n A_m A_k) + O(\delta t ^{N+1}). \\
\end{aligned}
\]

Next, we will denote by $\Om^{[N]}_k(\delta t)$ the truncation of $\Om_k(\delta t)$ up to the first $N$ terms of the Legendre coefficients.

\begin{proposition}
\label{prop:1}
For a given time-step $\delta t$, the first three terms of the Cayley--Magnus expansion combined with a Legendre expansion of $A$ until $N=3$ are given by
\begin{equation*}
\label{eq:eaxct-expansion}
\begin{aligned}
\Om_1(\delta t) &= \Om_1^{[2]}(\delta t) = A_1, \quad\quad \Om_2^{[2]}(\delta t) =-\frac{1}{6} [A_1, A_2], \\
\Om_3^{[2]}(\delta t) &= - \frac{1}{12} A_1^3 - \frac{1}{120} A_2^3 + \frac{1}{60} A_1A_2^2 - \frac{1}{30} A_2A_1A_2 + \frac{1}{60} A_2^2A_1. 
\end{aligned}
\end{equation*}

Moreover, we have
\begin{equation}
\label{eq:true-approx}
\Om(\delta t) = \Om_1(\delta t) + \Om_2^{[2]}(\delta t) + \Om_3^{[2]}(\delta t) + O(\delta t^4).
\end{equation}
\end{proposition}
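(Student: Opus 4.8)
The plan is to split the statement into two independent tasks: first, to obtain the closed forms of $\Om_1^{[2]}, \Om_2^{[2]}, \Om_3^{[2]}$ by evaluating the scalar Legendre integrals that multiply each matrix monomial in the expressions for $\Om_1,\Om_2,\Om_3$ already derived above; and second, to justify the truncation error $O(\delta t^4)$ in \eqref{eq:true-approx} by a pure order count. The single fact that drives the order count is that the $k$-th Legendre coefficient is small, $A_k = O(\delta t^k)$: this follows because $P_{k-1}$ is orthogonal to all polynomials of degree $\le k-2$, so inserting the Taylor expansion of $A(x\delta t)$ into $A_k = (2k-1)\delta t\int_0^1 A(x\delta t)P_{k-1}(x)\diff x$ annihilates every contribution below the $x^{k-1}$ term and leaves a leading term of size $\delta t^{k-1}\cdot\delta t$.

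For $\Om_1$ I would note that since $P_0\equiv 1$ the identity $\Om_1(\delta t)=\int_0^{\delta t}A(\xi)\diff\xi = A_1$ is exact, with no Legendre remainder, so $\Om_1=\Om_1^{[2]}=A_1$. For $\Om_2^{[2]}$ the task reduces to the four scalars $c_{nk}=\int_0^1\int_0^{x_1}P_{n-1}(x_2)P_{k-1}(x_1)\diff x_2\diff x_1$ with $n,k\in\{1,2\}$; the diagonal terms multiply $[A_n,A_n]=0$, and a direct evaluation gives $c_{12}=\tfrac{1}{6}$, $c_{21}=-\tfrac{1}{6}$, whence $\Om_2^{[2]}=-\tfrac{1}{2}(c_{12}-c_{21})[A_1,A_2]=-\tfrac{1}{6}[A_1,A_2]$.

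The core of the computation is $\Om_3^{[2]}$, built from the two integral blocks in the displayed formula for $\Om_3$ restricted to the modes $A_1,A_2$. I would (i) evaluate the triple integrals of the nested-commutator block, (ii) evaluate the triple integrals $\int_0^1 Q_n(x)P_{m-1}(x)Q_k(x)\diff x$ of the $A_nA_mA_k$ block, where $Q_j(x)=\int_0^x P_{j-1}(s)\diff s$, and then (iii) expand each surviving nested commutator $[[A_n,A_m],A_k]$ into its matrix monomials and collect like terms. The key structural observation — and the step I expect to be the main obstacle — is that the order-$\delta t^4$ monomials $A_1^2A_2$, $A_1A_2A_1$, $A_2A_1^2$, which appear with nonzero coefficients in \emph{each} block separately, cancel exactly when the two blocks are added; this cancellation is precisely what eliminates the commutator and leaves the commutator-free monomial form claimed for $\Om_3^{[2]}$. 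The remaining work is the routine verification that the surviving coefficients assemble to $-\tfrac{1}{12}A_1^3$ at order three together with the stated higher-order monomials.

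Finally, for \eqref{eq:true-approx} I would combine three order estimates. Truncating the Cayley--Magnus series \eqref{eq:Cayley-expension} at $m=3$ costs $O(\delta t^4)$, since each $\Om_m$ is an $m$-fold iterated integral of a degree-$m$ polynomial in $A$ and hence is $O(\delta t^m)$ (this is the order statement recalled above). Replacing $\Om_2$ by $\Om_2^{[2]}$ and $\Om_3$ by $\Om_3^{[2]}$ discards only the Legendre modes $A_j$ with $j\ge 3$; using $A_j=O(\delta t^j)$, the lowest discarded monomial in $\Om_2$ is $[A_1,A_3]=O(\delta t^4)$ and in $\Om_3$ is of the form $A_1A_1A_3=O(\delta t^5)$, so both substitutions introduce only $O(\delta t^4)$ errors, while the Legendre remainder of $A$ integrates to still higher order. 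Adding $\Om_1=A_1$ exactly then yields \eqref{eq:true-approx}. The only genuinely delicate point throughout is the order-$\delta t^4$ cancellation inside $\Om_3^{[2]}$; everything else is orthogonality of the Legendre polynomials together with power counting in $\delta t$.
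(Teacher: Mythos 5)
Your proposal is correct and follows essentially the paper's own route: the paper states Proposition~\ref{prop:1} without a separate proof, as the direct outcome of substituting the Legendre expansion of $A$ into the Cayley--Magnus terms and evaluating the scalar integrals, which is exactly your plan, and your order count for \eqref{eq:true-approx} via $A_k=O(\delta t^k)$ is the argument of Remark~\ref{rmk:2}. Your checkable claims are all right: $\Om_1=A_1$ holds exactly (a small sharpening of the paper's $O(\delta t^{N+1})$ remainder), $c_{12}=\tfrac16$, $c_{21}=-\tfrac16$ give $\Om_2^{[2]}=-\tfrac16[A_1,A_2]$, and the cancellation of the order-$\delta t^4$ monomials that you single out as the key structural point does occur.

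One concrete caveat on the step you defer as ``routine verification'': carrying it out does \emph{not} reproduce the stated formula verbatim. The two blocks evaluate to
\[
\begin{aligned}
\text{commutator block} &= \tfrac{1}{48}\bigl[[A_1,A_2],A_1\bigr]+\tfrac{1}{80}\bigl[[A_1,A_2],A_2\bigr],\\
\text{$\Om_1 A\,\Om_1$ block} &= -\tfrac14\Bigl(\tfrac13 A_1^3-\tfrac1{12}\bigl(A_1^2A_2+A_2A_1^2\bigr)+\tfrac16 A_1A_2A_1\\
&\hspace{2.2cm}-\tfrac1{60}\bigl(A_1A_2^2+A_2^2A_1\bigr)+\tfrac1{30}A_2A_1A_2\Bigr),
\end{aligned}
\]
and their sum confirms the cancellation of $A_1^2A_2$, $A_1A_2A_1$, $A_2A_1^2$ but gives
\[
\Om_3^{[2]}=-\tfrac1{12}A_1^3+\tfrac1{60}A_1A_2^2-\tfrac1{30}A_2A_1A_2+\tfrac1{60}A_2^2A_1,
\]
with \emph{zero} coefficient for $A_2^3$: the only possible source of that monomial is the scalar integral $\int_0^1(x^2-x)(2x-1)(x^2-x)\diff x$, which vanishes because the integrand is antisymmetric about $x=\tfrac12$. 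The term $-\tfrac1{120}A_2^3$ in the proposition therefore appears to be a typo in the paper rather than a flaw in your method; it is harmless, since $A_2^3=O(\delta t^6)$, so \eqref{eq:true-approx}, Proposition~\ref{prop:2} and the final scheme are unaffected. With that caveat noted, your proof is complete and matches the paper's implicit derivation.
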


\begin{remark}
\label{rmk:2}
For any $n \in \mathbb{N}^*$, $A_n$ is a term of order $\delta t^n$. This can be easily seen by comparing the Legendre expansion with an expansion $A(t) = \sum_{m\geq 1} a_m t^{m-1}$ in powers of $t$. Since we are looking to build a fourth-order scheme, according to the expression of $\Om_1$, $\Om_2$ and $\Om_3$, only the first two terms of this expansion, i.e.~$A_1$ and $A_2$ will be relevant for us.
\end{remark}

The approximation in \eqref{eq:true-approx} is a third-order approximation when considering the exact integral to compute $A_1$ and $A_2$. However by a matter of fact, the order is increased to fourth when we consider specifically the Gauss-Legendre quadrature to approximate this integral, i.e.\ the quadrature error exactly cancels the leading error term of the Cayley expansion. This later point has been discussed by Iserles \cite{Iserles:2001}. Thus, taking
\[
A^1 = A\left(t_n + \left(\frac{1}{2}-\frac{\sqrt{3}}{6} \right) \delta t\right), \quad A^2 = A\left(t_n + \left(\frac{1}{2}+\frac{\sqrt{3}}{6} \right) \delta t \right),
\]
and
\begin{equation}
\label{eq:Gauss-Legendre}
A_1 = \frac{\delta t}{2}(A^1 + A^2) + O(\delta t^5), \quad  A_2 = \frac{\delta t\sqrt{3}}{2}(A^2 - A^1) + O(\delta t^5),
\end{equation}
one has the following result.

\begin{proposition}
\label{prop:2}
Given a time step $\delta t$, the following approximation holds,
\begin{equation}
\label{eq:simplify-approx}
\Om(t_n + \delta t) = A_1 -\frac{1}{6} [A_1, A_2] - \frac{1}{12} A_1^3  + O(\delta t^5),
\end{equation}
with $A_1$ and $A_2$ defined in \eqref{eq:Gauss-Legendre}.
\end{proposition}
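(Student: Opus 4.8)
The plan is to start from the third-order expansion already secured in Proposition~\ref{prop:1} and then argue that the symmetry of the Gauss--Legendre sampling raises the order by one. Accordingly the proof splits into an elementary algebraic reduction, which produces the stated three-term expression modulo $O(\delta t^5)$, and an order-improvement step, which accounts for the sharper remainder $O(\delta t^5)$ rather than $O(\delta t^4)$.

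First I would invoke \eqref{eq:true-approx}, $\Om(\delta t) = \Om_1(\delta t) + \Om_2^{[2]}(\delta t) + \Om_3^{[2]}(\delta t) + O(\delta t^4)$, and prune $\Om_3^{[2]}$ using the order count of Remark~\ref{rmk:2}, namely $A_1 = O(\delta t)$ and $A_2 = O(\delta t^2)$. Among the five monomials of $\Om_3^{[2]}$, only $-\frac{1}{12}A_1^3 = O(\delta t^3)$ must be retained, since $\frac{1}{60}A_1A_2^2$, $-\frac{1}{30}A_2A_1A_2$ and $\frac{1}{60}A_2^2A_1$ are each $O(\delta t^5)$ while $-\frac{1}{120}A_2^3 = O(\delta t^6)$; all four are absorbed into the remainder. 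As $\Om_1 = A_1$ and $\Om_2^{[2]} = -\frac{1}{6}[A_1,A_2] = O(\delta t^3)$ are already reduced, this gives $\Om(\delta t) = A_1 - \frac{1}{6}[A_1,A_2] - \frac{1}{12}A_1^3 + O(\delta t^4)$, so the scheme is of order at least three.

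The substantive step is to improve this $O(\delta t^4)$ remainder to $O(\delta t^5)$, which is the content of Iserles' observation. Instead of a direct Taylor expansion about the midpoint $t_n + \delta t/2$, I would exploit the time-symmetry of the method. The one-step map is $Y_{n+1} = \Cay(\Om)Y_n$ with $\Om = A_1 - \frac{1}{6}[A_1,A_2] - \frac{1}{12}A_1^3$; to form its adjoint one steps backward from $t_n + \delta t$ with step $-\delta t$. Because the two nodes $t_n + (\frac{1}{2}\mp\frac{\sqrt3}{6})\delta t$ are symmetric about the interval midpoint, the backward step re-evaluates $A$ at the very same nodes with their roles interchanged, i.e.\ $A^1 \leftrightarrow A^2$. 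By \eqref{eq:Gauss-Legendre} this sends $A_1 \mapsto -A_1$ and $A_2 \mapsto A_2$; since every term in the expression for $\Om$ has odd degree in $A_1$, it follows that $\Om \mapsto -\Om$. The identity $\Cay(-\Om) = \Cay(\Om)^{-1}$ then shows that the backward step inverts the forward step, so the method is self-adjoint.

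Finally I would appeal to the standard fact that a self-adjoint one-step method has even order (see \cite{HLW:2006}): an odd-order leading error coefficient would have to coincide with its own negative and therefore vanish. Together with the order-three bound from the reduction step, this forces the order to be at least four, giving $\Om(t_n+\delta t) = A_1 - \frac{1}{6}[A_1,A_2] - \frac{1}{12}A_1^3 + O(\delta t^5)$. I expect this order-improvement step to be the main obstacle: one must carry out the node-swap bookkeeping for the non-autonomous problem to verify self-adjointness and then quote the even-order theorem. Should a self-contained argument be preferred, the alternative is to expand both the exact Cayley increment solving \eqref{eq:skew-Herm-prob} and the approximation in powers of $\delta t$ about the midpoint and check directly that their $\delta t^4$ coefficients agree.
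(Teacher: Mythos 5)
Your proposal is correct and follows essentially the same route as the paper: the same pruning of $\Om_3^{[2]}$ via the order count $A_k = O(\delta t^k)$ from Remark~\ref{rmk:2}, followed by the same symmetry/even-order argument for the Gauss--Legendre discretization. The only difference is that where the paper simply cites \cite[Sec.~4.2]{Iserles:2001} for the symmetry claim, you verify it explicitly (the node swap $A^1 \leftrightarrow A^2$ under $\delta t \mapsto -\delta t$ gives $A_1 \mapsto -A_1$, $A_2 \mapsto A_2$, hence $\Om \mapsto -\Om$ and self-adjointness via $\Cay(-\Om) = \Cay(\Om)^{-1}$), which makes the argument self-contained.
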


\begin{proof}
From Proposition~\ref{prop:1}, we have 
\[
\Om(\delta t) = \Om_1(\delta t) + \Om_2(\delta t) + \Om_3(\delta t) + O(\delta t^4).
\]
However, apart from $A_1^3$, all the other terms in $\Om_3$ are of order greater than $\delta t^5$, since $A_k$ is of order $\delta t^k$ (see Remark~\ref{rmk:2}), so we first obtain
\[
\Om(\delta t) = A_1(\delta t) -\frac{1}{6} [A_1(\delta t), A_2(\delta t)] - \frac{1}{12} A_1^3(\delta t)  + O(\delta t^4).
\]
Approximating $A_1$ and $A_2$ by the Gauss-Legendre quadrature as defined in \eqref{eq:Gauss-Legendre}, $\Om_3$ becomes symmetric so that the order of the error will be even, see \cite[Sec.~4.2]{Iserles:2001}. Thus the order increases to $\delta t^5$. 
\end{proof}

\begin{remark}
\label{remark:quadrature}
In the previous formulation, the use of Gauss quadrature is not strictly necessary, as other quadrature rules can also yield a fourth-order scheme. However, any alternative quadrature must be symmetric and of at least fourth order. An example illustrating this, based on a quadrature with nodes at $\delta t/4$, $\delta t/2$, and $3\delta t/4$, is provided in \ref{App2}.  
\end{remark}

Using the Gauss-Legendre quadrature given above to compute $A_1$ and $A_2$, the Cayley Magnus Time-propagator (CMT) scheme \eqref{eq:simplify-approx} is already a fourth-order approximation scheme as desired (see also  \cite{Iserles:2001}). However, it still contains commutators which can make the implementation expensive for large systems, can reduce sparsity, and may be more structurally complicated. The aim in the following section is to use a similar idea as the exponential commutator-free method developed in \cite{AF:2011}, using in our case the Cayley transform. To this end, we require a BCH-type formula for the Cayley transform.

\section{Commutator-free Cayley scheme}
\label{sec:Cayley-commutator-free}

\subsection{BCH-formula for Cayley transform}

For the derivation of the fourth-order commutator-free Cayley methods, we need to express the product of three Cayley transforms as a single Cayley transform up to order four accuracy. While this can be obtained by two applications of the Cayley-BCH formula developed by Iserles and Zanna \cite{IZ:2000}, for the sake of concreteness we show the derivation explicitly up to order four. 

\begin{proposition}
\label{prop:Cayley-BCH}

Let $A, B, C \in \mathfrak g$ be three matrices in a neighborhood of $0$, then the following formula holds
\begin{equation}
\label{eq:3-Cayley-composition}
\Cay(A) \Cay(B) \Cay(C) = \Cay(\Om(A,B,C)), 
\end{equation}
with                               
\begin{equation}
\label{eq:composition_matrix-3}
\begin{aligned}
\Om(A,B,C) =  A + B + C + \frac{1}{2}\left([A,B] + [A,C] + [B,C] \right) + \frac{1}{4}[A,B],C]  & \\
- \frac{1}{4}(ACB + BCA) - \frac{1}{4}\left(ABA + ACA + BAB + BCB \right. & \\
\left. + CAC + CBC \right) + F(A,B,C), &
\end{aligned}
\end{equation}
where $F$ is a series of homogeneous polynomials in $A$, $B$, $C$ of degrees $m$ with $m \ge 4$.
\end{proposition}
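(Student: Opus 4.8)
The plan is to build on the fact, recorded just above the statement, that the Cayley transform restricts to a diffeomorphism between a neighbourhood of $0$ in $\mathfrak g$ and a neighbourhood of $I$ in $G$. Since $A,B,C$ lie near $0$, each factor $\Cay(A),\Cay(B),\Cay(C)$ lies near $I$ in $\widetilde G$, and so does their product; hence there is a unique $\Om \in \widetilde{\mathfrak g}$ with $\Cay(\Om)=\Cay(A)\Cay(B)\Cay(C)$. This simultaneously settles existence and the fact that $\Om$ takes values in $\mathfrak g$, so the entire content of the proposition is the explicit low-order expansion of this $\Om$, which I would extract by a graded matching argument.

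First I would record the Taylor expansion of the Cayley transform straight from its definition $\Cay(X)=(I-\tfrac{X}{2})^{-1}(I+\tfrac{X}{2})$, namely
\[
\Cay(X) = I + X + \tfrac12 X^2 + \tfrac14 X^3 + O(X^4),
\]
by expanding $(I-\tfrac{X}{2})^{-1}$ as a Neumann series. Next I would multiply the three expansions of $\Cay(A)$, $\Cay(B)$, $\Cay(C)$ and collect the homogeneous components of total degree $0,1,2,3$ in the noncommuting variables $A,B,C$, preserving the left-to-right order $A,B,C$ throughout.

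Then I would write $\Om = \Om^{(1)}+\Om^{(2)}+\Om^{(3)}+F$, with $\Om^{(m)}$ homogeneous of degree $m$ and $F$ gathering all terms of degree $\ge 4$, substitute into $\Cay(\Om)=I+\Om+\tfrac12\Om^2+\tfrac14\Om^3+O(\Om^4)$, and match the two sides degree by degree. The degree-one identity gives $\Om^{(1)}=A+B+C$. The degree-two identity reads $\Om^{(2)}+\tfrac12(\Om^{(1)})^2 = \tfrac12(A^2+B^2+C^2)+AB+AC+BC$, and after cancelling the squared terms the mixed products collapse into $\tfrac12([A,B]+[A,C]+[B,C])$, as claimed. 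The degree-three part $\Om^{(3)}$ is then read off from $\Om^{(3)} = P_3 - \tfrac12(\Om^{(1)}\Om^{(2)}+\Om^{(2)}\Om^{(1)}) - \tfrac14(\Om^{(1)})^3$, where $P_3$ is the degree-three component of the product already computed.

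The main obstacle is purely bookkeeping in this last step: one must track a large number of noncommuting cubic monomials and verify that the pure powers $A^3,B^3,C^3$ together with the spurious mixed terms cancel, leaving exactly $\tfrac14[[A,B],C]-\tfrac14(ACB+BCA)-\tfrac14(ABA+ACA+BAB+BCB+CAC+CBC)$, with everything else absorbed into $F$. As a shortcut and an independent consistency check, I would note that the same $\Om(A,B,C)$ can be assembled by two successive applications of the pairwise Cayley--BCH formula of Iserles and Zanna, first forming $\Cay(A)\Cay(B)=\Cay(\Om(A,B))$ and then composing with $\Cay(C)$; agreement of the two routes through degree three confirms the stated coefficients.
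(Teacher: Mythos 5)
Your proposal is correct, and it organizes the argument differently from the paper. The paper's proof posits $\Cay(\Om)=\Cay(A)\Cay(B)\Cay(C)$ and immediately solves for $\Om$ in closed form,
\[
\Om \;=\; -2\,\bigl(I - \Cay(A)\Cay(B)\Cay(C)\bigr)\,\bigl(I + \Cay(A)\Cay(B)\Cay(C)\bigr)^{-1},
\]
and then (in an appendix) Taylor-expands this rational expression, including a Neumann-series expansion of the outer inverse, collecting all terms through degree three; convergence of those Neumann series is what lets the paper absorb the remainder into $F$. You never form that inverse: you obtain $\Om$ a priori from the diffeomorphism $\Cay\colon\tilde{\mathfrak g}\to\widetilde G$, which settles existence, uniqueness, and -- more cleanly than the paper, which never addresses it explicitly -- the fact that $\Om$ actually lies in $\mathfrak g$; you then extract its homogeneous components by matching degrees in $\Cay(\Om)=\Cay(A)\Cay(B)\Cay(C)$, i.e.\ by the triangular recursion $\Om^{(1)}=A+B+C$, $\Om^{(2)}=P_2-\tfrac12(\Om^{(1)})^2$, $\Om^{(3)}=P_3-\tfrac12(\Om^{(1)}\Om^{(2)}+\Om^{(2)}\Om^{(1)})-\tfrac14(\Om^{(1)})^3$, where $P_m$ is the degree-$m$ part of the product of the three expansions. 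That recursion is exactly right: it reproduces, for instance, the coefficient $-\tfrac14$ of $ABA$ and the coefficient $\tfrac14$ of $ABC$ consistent with the term $\tfrac14[[A,B],C]$ (which you correctly read despite the statement's typographical omission of the opening bracket). What the paper's route buys is a single closed formula valid to all orders plus an explicit convergence discussion; what yours buys is avoidance of the expansion of $(I+P)^{-1}$ and a cleaner structural justification, with analyticity of $\Om$ guaranteeing that the matched components are its genuine Taylor components. Like the paper -- which relegates the same work to its appendix -- you leave the cubic bookkeeping unperformed, but the computation you set up is precisely the one needed, and your proposed consistency check via two applications of the Iserles--Zanna Cayley--BCH formula is exactly the alternative derivation the paper itself mentions before electing to expand explicitly.
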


\begin{proof}
First, let us observe that for a small enough neighborhood of $0$, one has $1/2 \notin \sigma(A) \cup \sigma(B) \cup \sigma(C)$. 
Now, considering that $\Cay(A) \Cay(B) \Cay(C) = \Cay(\Om(A,B,C)) $, then we get
\[
\begin{aligned}
& \Cay(A) \Cay(B) \Cay(C) = \left( I - \frac{\Om(A,B,C)}{2} \right)^{-1}  \left(I + \frac{\Om(A,B,C)}{2} \right), \\
\Rightarrow ~~& \left( I - \frac{\Om(A,B,C)}{2} \right) \Cay(A) \Cay(B) \Cay(C) = \left(I + \frac{\Om(A,B,C)}{2} \right) \\
\Rightarrow ~~& \frac{\Om(A,B,C)}{2}\left( I + \Cay(A) \Cay(B) \Cay(C)\right) = \Cay(A) \Cay(B) \Cay(C) - I \\
\Rightarrow ~~ & \Om(A,B,C) = -2 \left(I - \left(I - \frac{A}{2} \right)^{-1} \left(I + \frac{A}{2} \right)\left(I - \frac{B}{2} \right)^{-1} \left(I + \frac{B}{2} \right) \cdot \right. \\
&\hspace{2.5cm} \left. \left(I - \frac{C}{2} \right)^{-1} \left(I + \frac{C}{2} \right) \right)  \cdot \left(I + \left(I - \frac{A}{2} \right)^{-1} \left(I + \frac{A}{2} \right) \cdot \right. \\
&\hspace{2.5cm} \left. \left(I - \frac{B}{2} \right)^{-1} \left(I + \frac{B}{2} \right) \left(I - \frac{C}{2} \right)^{-1} \left(I + \frac{C}{2} \right) \right)^{-1}.
\end{aligned}
\]
A series expansion\footnote{Details of the computation can be found in the appendix.} of this last relation leads to the desired result.
\end{proof}

\begin{remark}
If we want a fourth-order scheme, the terms of the formula obtained in Proposition~\ref{prop:Cayley-BCH} by ignoring $F(A,B,C)$ are enough. However, one needs to compute more terms if we are looking at more than fourth-order. 
\end{remark}

The Cayley transform version of the usual BCH- and sBCH-formula \cite{IZ:2000} can then be deduced from Proposition~\ref{prop:Cayley-BCH} by setting $C = 0$ and $C=A$ respectively. 

\begin{corollary}[BCH-formula]
Considering $A, B \in \mathfrak{g}$ in the neighborhood of $0$, then one has
\begin{equation}
\label{eq:2_Cayley_composition}
\Cay(A) \Cay(B) = \Cay(\Om(A,B)),
\end{equation}
with 
\begin{equation}
\label{eq:composition_matrix-2}
\begin{aligned}
\Om =  A + B + \frac{1}{2}[A,B] - \frac{1}{4}(ABA + BAB) + F(A,B)
\end{aligned}
\end{equation}
where $F$ is a
series of homogeneous polynomials in $A$, $B$ of degrees $m$ with $m \ge 4$.
\end{corollary}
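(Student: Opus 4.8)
The plan is to derive the two-argument Cayley--BCH formula as a direct specialisation of Proposition~\ref{prop:Cayley-BCH}, following the recipe announced in the sentence preceding the statement. The crucial observation is that $\Cay(0) = (I-0)^{-1}(I+0) = I$, so that fixing $C = 0$ in the three-factor identity collapses the left-hand side to a product of two Cayley transforms, $\Cay(A)\Cay(B)\Cay(0) = \Cay(A)\Cay(B)$. It therefore suffices to evaluate the composition matrix $\Om(A,B,C)$ of \eqref{eq:composition_matrix-3} at $C=0$ and read off the result.

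Concretely, I would first confirm that the hypotheses transfer: for $A,B$ in a sufficiently small neighbourhood of $0$ in $\mathfrak g$ the triple $(A,B,0)$ lies in the domain on which Proposition~\ref{prop:Cayley-BCH} applies, in particular $1/2 \notin \sigma(A)\cup\sigma(B)$, so every Cayley transform in sight is well-defined. I would then substitute $C=0$ into \eqref{eq:composition_matrix-3} monomial by monomial. Each term carrying $C$ as an explicit factor vanishes: the linear part $A+B+C$ reduces to $A+B$; among the quadratic contributions only $\tfrac{1}{2}[A,B]$ survives, since $[A,0]=[B,0]=0$; the cubic commutator $\tfrac{1}{4}[[A,B],C]$ disappears, and of the cubic monomials $ACB,\,BCA,\,ABA,\,ACA,\,BAB,\,BCB,\,CAC,\,CBC$ only $ABA$ and $BAB$ are free of a factor $C$. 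This leaves precisely $A+B+\tfrac{1}{2}[A,B]-\tfrac{1}{4}(ABA+BAB)$, matching \eqref{eq:composition_matrix-2}.

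The remaining bookkeeping concerns the tail. I would note that $F(A,B,0)$ is simply the restriction to $C=0$ of the degree-$\ge 4$ series $F(A,B,C)$; since setting an argument to zero can only delete monomials and never lowers the degree of a surviving homogeneous term, $F(A,B,0)$ remains a series of homogeneous polynomials in $A,B$ of degrees $m\ge 4$, as claimed. I do not expect a genuine obstacle here, as the statement is an evaluation of an already-established identity; the only points demanding a little care are verifying which cubic monomials contain a factor $C$ and confirming that the degree bound on $F$ is preserved under specialisation. One could alternatively repeat the expansion argument of Proposition~\ref{prop:Cayley-BCH} with only two factors, but the specialisation route is shorter and reuses the work already done.
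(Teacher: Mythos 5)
Your proposal is correct and matches the paper's own route exactly: the paper states that this corollary "can be deduced from Proposition~\ref{prop:Cayley-BCH} by setting $C=0$", which is precisely your specialisation argument, including the observation that $\Cay(0)=I$ collapses the product and that only the monomials free of $C$ (namely $A+B$, $\tfrac{1}{2}[A,B]$, $ABA$, $BAB$) survive. Your additional care about the domain hypotheses and the degree bound on $F(A,B,0)$ is sound and goes slightly beyond what the paper spells out.
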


\begin{remark}
If we consider the general Cayley transform, then 
\[
\Cay_{c_1}(A) \Cay_{c_2}(B) = \Cay(\Om)
\]
leads to
\[
\begin{aligned}
\Om =  \frac{2 x_1}{|c_1|^2} A + \frac{2 x_2}{|c_2|^2} B + \frac{2x_1x_2}{|c_1|^2|c_2|^2}[A,B] - \frac{2 x_1x_2}{|c_1|^2|c_2|^2} \left(\frac{x_1}{|c_1|^2} ABA + \frac{x_2}{|c_2|^2} BAB \right)&  \\
\hspace{0.2cm}  + 2i \left( \frac{y_1}{c_1|c_1|^4} A^3  + \frac{y_2}{c_2|c_2|^4} B^3 + \frac{x_1 x_2 y_2}{|c_1|^2|c_2|^4} B^2 A - \frac{x_1 x_2 y_1}{|c_1|^4|c_2|^2} A^2 B - \frac{2 x_1 x_2 y_2}{|c_1|^2|c_2|^4} A B^2 \right. & \\
\left. - \frac{y_1}{|c_1|^4} A^2 - \frac{y_2}{|c_2|^4} B^2 \right) +  F(A,B), &
\end{aligned}
\]
with $c_1 = x_1 + i y_1$, $c_2 = x_2 + i y_2$ and where $F(A,B)$ is as in the previous corollary.
\end{remark}

\begin{corollary}[sBCH-formula]
Considering $A, B \in \mathfrak g$ in the neighborhood of $0$, then one has
\begin{equation}
\label{eq:2-Cayley-composition}
\Cay(A) \Cay(B) \Cay(A) = \Cay(\Om(A,B)) 
\end{equation}
where 
\begin{equation}
\begin{aligned}
\Om =  2A + B - \frac{1}{2}(A^2B + BA^2 + BAB + A^3) + F(A,B)
\end{aligned}
\label{eq:composition_matrix_2}
\end{equation}
where $F$ is a series of homogeneous polynomials in $A$ and $B$ of degree $m$ with $m \ge 4$.
\end{corollary}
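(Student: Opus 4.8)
The plan is to obtain the sBCH-formula as a specialization of the three-term composition formula already established in Proposition~\ref{prop:Cayley-BCH}, exactly as anticipated in the remark preceding the statement: substitute $C = A$ into \eqref{eq:composition_matrix-3} and simplify. Since $A,B$ lie in a small neighborhood of $0$, so does $C = A$, hence $1/2 \notin \sigma(A)\cup\sigma(B)$ and every Cayley transform appearing---together with $\Cay(\Om)$---is well-defined; the composition identity \eqref{eq:3-Cayley-composition} therefore specializes verbatim to $\Cay(A)\Cay(B)\Cay(A) = \Cay(\Om(A,B,A))$, and the entire task reduces to evaluating $\Om(A,B,A)$ from the explicit right-hand side of \eqref{eq:composition_matrix-3}.

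First I would dispatch the low-order terms. The linear part $A+B+C$ collapses to $2A+B$. For the quadratic part I would invoke antisymmetry of the bracket: $[A,C]=[A,A]=0$ and $[B,C]=[B,A]=-[A,B]$, so that $\frac{1}{2}\big([A,B]+[A,C]+[B,C]\big)=0$ and no degree-two term survives---consistent with the absence of a quadratic contribution in \eqref{eq:composition_matrix_2}.

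The substance of the argument is the cubic part, and the careful (if routine) bookkeeping there is the main obstacle. With $C=A$ the three cubic groups of \eqref{eq:composition_matrix-3} become, respectively, $\frac{1}{4}[[A,B],A]=\frac{1}{4}\big(2ABA-A^2B-BA^2\big)$; then $-\frac{1}{4}(ACB+BCA)=-\frac{1}{4}(A^2B+BA^2)$; and the six-term product $-\frac{1}{4}(ABA+ACA+BAB+BCB+CAC+CBC)$ collapses to $-\frac{1}{2}(ABA+A^3+BAB)$ once one notes $ACA=CAC=A^3$, $BCB=BAB$, and $CBC=ABA$. Summing the three contributions, the $ABA$ terms cancel and the remainder is precisely $-\frac{1}{2}\big(A^2B+BA^2+BAB+A^3\big)$, matching the stated cubic term.

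Finally, the remainder $F(A,B,C)$ of degree $\ge 4$ cannot have its degree lowered by the substitution $C=A$, so it specializes to a series of homogeneous polynomials in $A$ and $B$ of degree $m\ge 4$, which I relabel $F(A,B)$. This establishes \eqref{eq:composition_matrix_2}. I anticipate no genuine difficulty beyond the single commutator identity $[[A,B],A]=2ABA-A^2B-BA^2$ and keeping track of which of the six degree-three products coincide after setting $C=A$; alternatively, one could rerun the direct expansion-and-inversion used to prove Proposition~\ref{prop:Cayley-BCH} with the triple product $\Cay(A)\Cay(B)\Cay(A)$, but the specialization route is considerably shorter.
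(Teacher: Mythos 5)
Your proposal is correct and follows exactly the route the paper intends: the paper derives this corollary precisely by setting $C=A$ in Proposition~\ref{prop:Cayley-BCH}, and your cubic bookkeeping (the identity $[[A,B],A]=2ABA-A^2B-BA^2$, the collapse of the six triple products, and the cancellation of the $ABA$ terms) correctly reproduces the stated $-\tfrac{1}{2}(A^2B+BA^2+BAB+A^3)$.
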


\subsection{Commutator-Free Cayley Time-propagator scheme (CFCT)} 

In this section, in analogy to the commutator-free quasi-Magnus integrators, we seek a fourth-order approximation of the form
\begin{equation}
\label{eq:com-free-scheme}
Y(\delta t)\ \approx Y_1 := \Cay(\Om_1(\delta t)) \Cay(\Om_2(\delta t)) \Cay(\Om_3(\delta t))Y_0 = \Cay(\widetilde \Om(\delta t))Y_0,
\end{equation}
with $\Om_i = \sum_{k=1}^2 \alpha_{i,k} A_k$, $i=1,2,3$, with $A_k$ being the Gauss-Legendre coefficients of $A$ defined in \eqref{eq:Gauss-Legendre} and $\alpha_{i,k} \in \R$.
Recall that the Lie algebra $\mathfrak g$ has the structure of a real vector space. If $A \in \mathfrak g$ then the Gauss-Legendre coefficients are elements of $\mathfrak g$. Since we are seeking a Lie group structure-preserving scheme, we seek real coefficients $\alpha_{i,k}$ (rather than complex coefficients) such that $\Omega_i \in \mathfrak g$ is guaranteed.
We want to find, if there exists, the correct coefficients $ \alpha_{i,k}$, $k=1,2$ and $i=1,2,3$, such that approximation \eqref{eq:com-free-scheme} leads to a fourth-order approximation.
Using the Cayley--BCH formula from Proposition~\ref{prop:Cayley-BCH}, we get
\begin{eqnarray}
\label{eq:BCH-develop}
\widetilde \Om &=& \sum_{i=1}^3\Om_i - \frac{1}{4}(\Om_1\Om_2\Om_1 + \Om_1\Om_3\Om_1 + \Om_2\Om_1\Om_2 + \Om_2\Om_3\Om_2 + \Om_3\Om_1\Om_3 + \Om_3\Om_2\Om_3) \nonumber \\
&&~ - \frac{1}{4}(\Om_1\Om_3\Om_2 + \Om_2\Om_3\Om_1) + \frac{1}{2}([\Om_1, \Om_2] + [\Om_1, \Om_3] + [\Om_2, \Om_3]) + \frac{1}{4}[[\Om_1, \Om_2], \Om_3] \nonumber\\
&=& \sum_{i=1}^3\sum_{j=1}^2 \alpha_{ij} A_j + \frac{1}{2}\sum_{i,j=1}^2 (\alpha_{1i}\alpha_{2j} + \alpha_{1i}\alpha_{3j} + \alpha_{2i}\alpha_{3j})[A_i,A_j] - \frac{1}{4}\sum_{i,j,k=1}^2 \left(\alpha_{1i}\alpha_{3j}\alpha_{2k} \right. \nonumber \\
&&~  + \alpha_{2i}\alpha_{3j}\alpha_{1k} + \alpha_{1i}\alpha_{2j}\alpha_{1k} + \alpha_{1i}\alpha_{3j}\alpha_{1k} + \alpha_{2i}\alpha_{1j}\alpha_{2k} + \alpha_{2i}\alpha_{3j}\alpha_{2k} + \alpha_{3i}\alpha_{1j}\alpha_{3k}  \nonumber \\
&&~ + \left. \alpha_{3i}\alpha_{2j}\alpha_{3k} + \alpha_{2i}\alpha_{1j}\alpha_{3k} + \alpha_{3i}\alpha_{1j}\alpha_{2k} - \alpha_{1i}\alpha_{2j}\alpha_{3k} - \alpha_{3i}\alpha_{2j}\alpha_{1k} \right)A_i A_j A_k \nonumber 
\end{eqnarray}
On the other hand, from \eqref{eq:simplify-approx} one has
\[
\Om(\delta t) = A_1(\delta t) -\frac{1}{6} [A_1(\delta t), A_2(\delta t)] - \frac{1}{12} A_1^3(\delta t) + O(\delta t^5).
\]
Equating the two expressions for $\Omega(\delta t)$ and $\tilde\Om$, we obtain nonlinear relations for the coefficients. A solution is provided by 
\begin{equation}
\label{eq:coefs}
\begin{aligned}
\alpha_{31} &= \alpha_{11}, ~~ \alpha_{21} = 1 - 2\alpha_{11}, ~~ \alpha_{12} = - \alpha_{32} = \alpha_{11} - \alpha_{11}^2, \\
\alpha_{22} &= 0, ~~ \text{with} ~~ \alpha_{11}  = \frac{2^{1/3}}{3} +  \frac{2^{2/3}}{6} + \frac{2}{3}.
\end{aligned}
\end{equation}
We obtain the following final scheme
\begin{equation}
\label{eq:final-scheme}
\begin{aligned}
Y_1= \Cay(\alpha_{11}A_1(\delta t) + \alpha_{12}A_2(\delta t)) \Cay(\alpha_{21}A_1(\delta t)) \cdot & \\
\Cay(\alpha_{11}A_1(\delta t) - \alpha_{12}A_2(\delta t))Y_0 &
\end{aligned}
\end{equation}

\begin{proposition}
For a given time-step $\delta t$, we have
\begin{equation*}
\begin{aligned}
Y(\delta t) = \Cay(\alpha_{11}A_1(\delta t) + \alpha_{12}A_2(\delta t)) \Cay(\alpha_{21}A_1(\delta t)) \cdot & \\
\Cay(\alpha_{11}A_1(\delta t) - \alpha_{12}A_2(\delta t))Y_0 + O(\delta t^5) &
\end{aligned}
\end{equation*}
with coefficients $\alpha_{ij}$ as in \eqref{eq:coefs}.
\end{proposition}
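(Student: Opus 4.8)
The plan is to prove the stated fifth-order local error by matching the single exponent $\widetilde\Om$ of the composition against the Cayley--Magnus exponent $\Om(\delta t)$ of Proposition~\ref{prop:2}, and then transferring the order from exponents to the flow by regularity of the Cayley transform. By Proposition~\ref{prop:Cayley-BCH} the right-hand side of \eqref{eq:final-scheme} equals $\Cay(\widetilde\Om)Y_0$, with $\widetilde\Om$ the series \eqref{eq:BCH-develop} evaluated at $\Om_i=\alpha_{i1}A_1+\alpha_{i2}A_2$, while Lemma~\ref{lemma:1} gives $Y(\delta t)=\Cay(\Om(\delta t))Y_0$. Since $A_k=O(\delta t^k)$ by Remark~\ref{rmk:2}, for small $\delta t$ the arguments $\Om_i$ lie in the neighbourhood of $0$ on which Proposition~\ref{prop:Cayley-BCH} is valid; moreover $\Cay$ is analytic there and hence locally Lipschitz, so $\norm{Y_1-Y(\delta t)}\le L\,\norm{Y_0}\,\norm{\widetilde\Om-\Om(\delta t)}$. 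It therefore suffices to show $\widetilde\Om=\Om(\delta t)+O(\delta t^5)$.

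For this I would carry out the order bookkeeping in \eqref{eq:BCH-develop} using $A_1=O(\delta t)$ and $A_2=O(\delta t^2)$. Up to $O(\delta t^4)$ the only surviving monomials are the linear terms $A_1$ and $A_2$, the single commutator $[A_1,A_2]$ (order $3$), the cube $A_1^3$ (order $3$), and the order-$4$ products $A_1^2A_2$, $A_1A_2A_1$, $A_2A_1^2$; all remaining commutators and products are of order $\ge 5$. Comparing with $\Om(\delta t)=A_1-\tfrac16[A_1,A_2]-\tfrac1{12}A_1^3+O(\delta t^5)$ from \eqref{eq:simplify-approx} yields the order conditions: $\alpha_{11}+\alpha_{21}+\alpha_{31}=1$ and $\alpha_{12}+\alpha_{22}+\alpha_{32}=0$ (linear part); the coefficient of $[A_1,A_2]$ equals $-\tfrac16$; the coefficient of $A_1^3$ equals $-\tfrac1{12}$; and the three order-$4$ product coefficients vanish.

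Rather than attacking the full six-variable nonlinear system, I would impose the palindromic ansatz of \eqref{eq:coefs}, namely $\alpha_{31}=\alpha_{11}$, $\alpha_{32}=-\alpha_{12}$ and $\alpha_{22}=0$, which renders the composition $\Cay(\Om_1)\Cay(\Om_2)\Cay(\Om_3)$ symmetric with $\Om_3$ the image of $\Om_1$ under $A_2\mapsto-A_2$. Because $A_1$ and $A_2$ carry opposite parity about the midpoint of the step, this symmetry is expected to force the even-order contribution to cancel, and in particular to kill the three order-$4$ products; I would verify this by a direct expansion of the cubic sum in \eqref{eq:BCH-develop}, organising the terms by their number of index-$2$ factors. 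With this ansatz the linear condition $\alpha_{12}+\alpha_{22}+\alpha_{32}=0$ is automatic, while $\alpha_{11}+\alpha_{21}+\alpha_{31}=1$ gives $\alpha_{21}=1-2\alpha_{11}$.

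The two remaining conditions are where the crux lies. A direct computation of the $A_1^3$ coefficient under the ansatz collapses it to $-\tfrac12\alpha_{11}(1-\alpha_{11})^2$, so the condition becomes the genuine cubic $\alpha_{11}(1-\alpha_{11})^2=\tfrac16$, whose relevant real root is the closed form $\alpha_{11}=\tfrac{2^{1/3}}3+\tfrac{2^{2/3}}6+\tfrac23$ of \eqref{eq:coefs}. The $[A_1,A_2]$ condition instead reduces to $\alpha_{12}(\alpha_{11}-1)=-\tfrac16$, which fixes $\alpha_{12}$; the prescribed value $\alpha_{12}=\alpha_{11}-\alpha_{11}^2$ agrees with this \emph{precisely} when $\alpha_{11}$ solves the same cubic $\alpha_{11}(1-\alpha_{11})^2=\tfrac16$. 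The main obstacle is thus to confirm this simultaneous consistency: that, after the symmetry reduction has left only the two free parameters $\alpha_{11}$ and $\alpha_{12}$, the cubic and the commutator conditions are compatible rather than competing. Once this is checked, \eqref{eq:coefs} gives $\widetilde\Om=\Om(\delta t)+O(\delta t^5)$, and the Lipschitz bound of the first paragraph completes the proof.
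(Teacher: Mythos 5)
Your proposal is correct, and its skeleton is the same as the paper's: express the triple product as $\Cay(\widetilde\Om)$ via Proposition~\ref{prop:Cayley-BCH}, match $\widetilde\Om$ against the Cayley--Magnus exponent \eqref{eq:simplify-approx}, and solve the resulting order conditions; your Lipschitz step for transferring the order from exponents to the flow is what the paper delegates to the earlier remark that order-$p$ accuracy in $\Om$ yields order-$p$ accuracy in $Y$. Where you genuinely differ is in how the order conditions are solved. The paper writes out the full six-equation nonlinear system (see its appendix) and solves it with Maple, so \eqref{eq:coefs} appears as the output of a black-box symbolic computation; you instead impose the palindromic ansatz $\alpha_{31}=\alpha_{11}$, $\alpha_{32}=-\alpha_{12}$, $\alpha_{22}=0$ from the start and reduce everything by hand. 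Your reductions check out: with $a=\alpha_{11}$, $b=\alpha_{21}=1-2a$, $c=\alpha_{12}$, the $A_1^3$ coefficient of \eqref{eq:BCH-develop} collapses to $-\tfrac14\cdot 2a(a+b)^2=-\tfrac12 a(1-a)^2$, the $[A_1,A_2]$ coefficient to $-c(1-a)$, and the order-$4$ part of the cubic sum collapses to $+\tfrac14 abc\,[[A_1,A_2],A_1]$, which cancels exactly against $\tfrac14[[\Om_1,\Om_2],\Om_3]=-\tfrac14 abc\,[[A_1,A_2],A_1]+O(\delta t^5)$; moreover $\alpha_{11}=\tfrac{2^{1/3}}{3}+\tfrac{2^{2/3}}{6}+\tfrac{2}{3}$ does solve $a(1-a)^2=\tfrac16$ (set $s=a-1$; one finds $s=1/(2^{2/3}+2^{1/3})$ and $(2^{2/3}+2^{1/3})^3=6(2^{2/3}+2^{1/3}+1)$, whence $s^2(1+s)=\tfrac16$). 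So your route actually proves what the paper only asserts computationally, which is a real gain in transparency.

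Two remarks. First, the ``main obstacle'' you flag is not one: the $A_1^3$ condition determines $a$, the commutator condition then determines $c=1/(6(1-a))$, and the closed form $c=a-a^2$ is equivalent to this via the cubic; the two conditions fix the two remaining parameters independently and cannot compete. Second, you inherit the paper's one real omission: \eqref{eq:BCH-develop} silently drops the remainder $F$ of Proposition~\ref{prop:Cayley-BCH}, which is of degree $\ge 4$ in $(\Om_1,\Om_2,\Om_3)$ and therefore contributes, a priori, an $A_1^4$ term at $O(\delta t^4)$, not $O(\delta t^5)$, so your claim that ``all remaining commutators and products are of order $\ge 5$'' is not yet justified. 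The term does in fact vanish: $\widetilde\Om=\Cay^{-1}\bigl(\Cay(\Om_1)\Cay(\Om_2)\Cay(\Om_3)\bigr)$ is $\mathfrak g$-valued, and in a $\mathfrak g$-valued expansion the coefficient of a word of even length must equal minus the coefficient of the reversed word, so the palindromic word $A_1^4$ cannot occur. A complete proof should include this one-line parity argument (or verify the degree-$4$ terms of $F$ directly); as written, both your argument and the paper's stop one step short of it.
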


\begin{remark}
To simplify notation, we have considered only the interval $[t_0, t_0+\delta t]$. However, since the operator $A$ is non-autonomous, the coefficients $A_k$ will naturally depend on $t_n$ for a given discretization $n = 1,\cdots,N$. So, the final scheme will look like
\[
\begin{aligned}
Y_{n+1} = \Cay(\alpha_{11}A_1(t_n,\delta t) + \alpha_{12}A_2(t_n,\delta t)) \Cay(\alpha_{21}A_1(t_n,\delta t)) \cdot & \\
\Cay(\alpha_{11}A_1(t_n,\delta t) - \alpha_{12}A_2(t_n,\delta t))Y_n&
\end{aligned}
\]
for a given time stepping $t_n = t_0,\ldots,t_N$.
\end{remark}

\begin{remark}
In contrast to the exponential commutator-free method \cite{AF:2011}, we cannot expect a fourth-order scheme with the product of only two Cayley transforms. Indeed, if we write 
$Y_1 = \Cay(\Om_1(\delta t)) \Cay(\Om_2(\delta t))Y_0 = \Cay(\widetilde \Om(\delta t))Y_0$, then the coefficients $\alpha_{i,k}$ will be complex. Complex coefficients, however, are not compatible with the structure of the Lie algebra $\mathfrak g$, which is a vector space over $\mathbb R$.
\end{remark}

\section{Examples}
\label{sec:Examples}

In this section we consider two examples to illustrate our results. The first one, a driven two-level quantum system, is a classical and well known system in quantum computing. It appears as a good starting point since it has also been considered in the context of the exponential commutator-free approach \cite{AF:2011}. So, we will be able to compare our scheme with the fourth-order commutator-free exponential time-propagator CFET4:2 derived in \cite{AF:2011} for a system where we have an analytical solution. 
In the second example we consider the Schrödinger equation with a time-dependent Hamiltonian in dimension one. 

Time-dependent Schrödinger equations with explicitly time-dependent potentials occur naturally in quantum optimal control, where the potential generally contains a time-dependent control function (laser profile, magnetic field, etc.). Several approaches based on splitting methods have been developed for this type of problems (see for instance \cite{BCM2008, IKP, Pranav2019}). The interest of our approach is to propose an alternative to the use of exponentials to integrate the solution. 
Indeed, in the context of quantum optimal control, the search for a solution involves an optimization process that usually requires a significant number of system integrations. The use of Cayley transforms instead of exponentials therefore makes sense, given that the geometric properties of the system are preserved, while the solution is computed more cheaply. 

For the first example, we consider a magnetic field such that we can analytically express the solution, which is taken as the reference solution. In the second example, we will consider CFET4:2 from \cite{AF:2011} as the reference solution.

\subsection{A driven two-level system}

For our first test problem, we consider an example from \cite{AF:2011} which is a driven two-level system, realized for instance by a spin $1/2$ in a magnetic field $\vec B(t) = (B_x(t), B_y(t), B_z(t))$. In the eigenbasis of the z-component of angular momentum, the Hamiltonian operator is defined by
\[
H(t) = \frac{1}{2} \begin{pmatrix}
B_z(t) & B_x(t) - i B_y(t) \\
B_x(t) + i B_y(t) & -B_z(t)
\end{pmatrix}.
\]
With the magnetic field $\vec B(t) = (2V \cos(2\omega t), 2V \sin(2\omega t), 2 \Delta)$ the system is periodically driven and the propagator can be analytically expressed using Floquet theory. In this particular case, the Hamiltonian becomes
\[
H(t) = \frac{1}{2} \begin{pmatrix}
\Delta & V e^{-2i\omega t} \\
V e^{2i\omega t} & -\Delta
\end{pmatrix},
\]
where $\Delta, V, \omega \in \R$. The system to solve is given by
\begin{equation}
\label{eq:syst-1}
\dot{Y}(t) = A(t)Y(t), \quad Y(t_0) = I, \quad  \quad A(t) = -i H(t),
\end{equation}
with $Y(t) \in \mathrm{U}_2(\C) \subset \C^{2\times 2}$ and $t \in [0, T]$. The exact solution is given by
\[
Y(t) = \begin{pmatrix}
e^{-i w t}\left(\cos(\Lambda t) - i\frac{\Delta-\omega}{\Lambda}\sin(\Lambda t) \right) & -i\frac{V}{\Lambda}e^{-i w t}\sin(\Lambda t)  \\
& \\
-i\frac{V}{\Lambda}e^{i w t}\sin(\Lambda t) & e^{i w t}\left(\cos(\Lambda t) + i\frac{\Delta-\omega}{\Lambda}\sin(\Lambda t) \right)
\end{pmatrix},
\]
with $\Lambda = \sqrt{(\Delta - \omega)^2 + V^2}$. 
Notice that in accordance with Floquet theory for periodically driven systems, $Y(\pi n/\omega, 0) = Y(\pi/\omega, 0)^n$ for any given $n \in \mathbb{N}$. Also, the transition probability spin up $\leftrightarrow$ spin down,
\[
P(t) = |Y_{21}(t,0)|^2 = \left(\frac{V}{\Om}\right)^2 \sin^2(\Om t),
\]
is typical for a Breit--Wigner resonance.

For the numerical simulations, we solve the system \eqref{eq:syst-1} using different numerical schemes. The first one is the Commutator-Free Exponential Time-propagator (denoted as CFET4:2) from \cite[Prop.~4.2]{AF:2011} which is the exponential version of the main method developed in this work. We use also the Cayley--Magnus Time-propagator (denoted as CMT), given by equation \eqref{eq:simplify-approx} and already obtained by Iserles in \cite{Iserles:2001}, which contains nested commutators. Finally we use the main method derived is this paper namely the Commutator-Free Cayley Time-propagator (denoted as CFCT) and given by 
\[
\begin{aligned}
Y(t_{n+1}) \approx Y^{\mathrm{[CFCT]}}_{n+1} = \Cay(\alpha_{11}A_1(t_n,\delta t) + \alpha_{12}A_2(t_n,\delta t)) \Cay(\alpha_{21}A_1(t_n,\delta t)) \cdot & \\
\Cay(\alpha_{11}A_1(t_n,\delta t) - \alpha_{12}A_2(t_n,\delta t)) Y_n. &
\end{aligned}
\]
We propagating until $T = 20\pi/\omega$, taking $\omega = 1$, $\Delta = V = 0.5$. For the error analysis we consider the Euclidean norm in $\C^2$. The solutions as well as error and total energy during the propagation are displayed in Figure~\ref{fig:driven-two-level-system-solution}. Conservation of the norm (also display in the same figure), insure the preservation of the transition probability $P(t)$ by the numerical scheme, which is not the case when using a classical scheme as the Runge--Kutta method RK45.

\begin{figure}[ht!]  
\begin{minipage}{0.47\textwidth}
\def\sizefig{1.}
\includegraphics[width=\sizefig\textwidth]{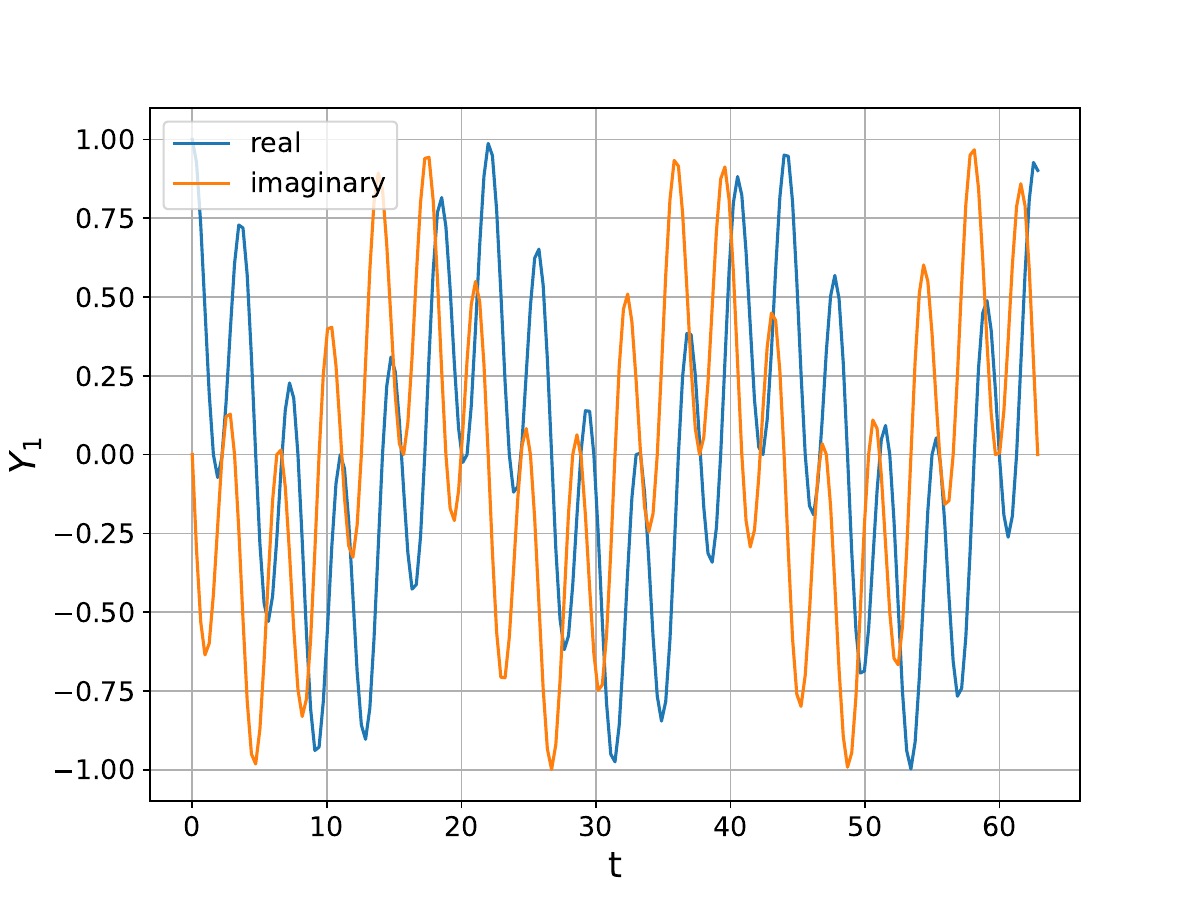}
\includegraphics[width=\sizefig\textwidth]{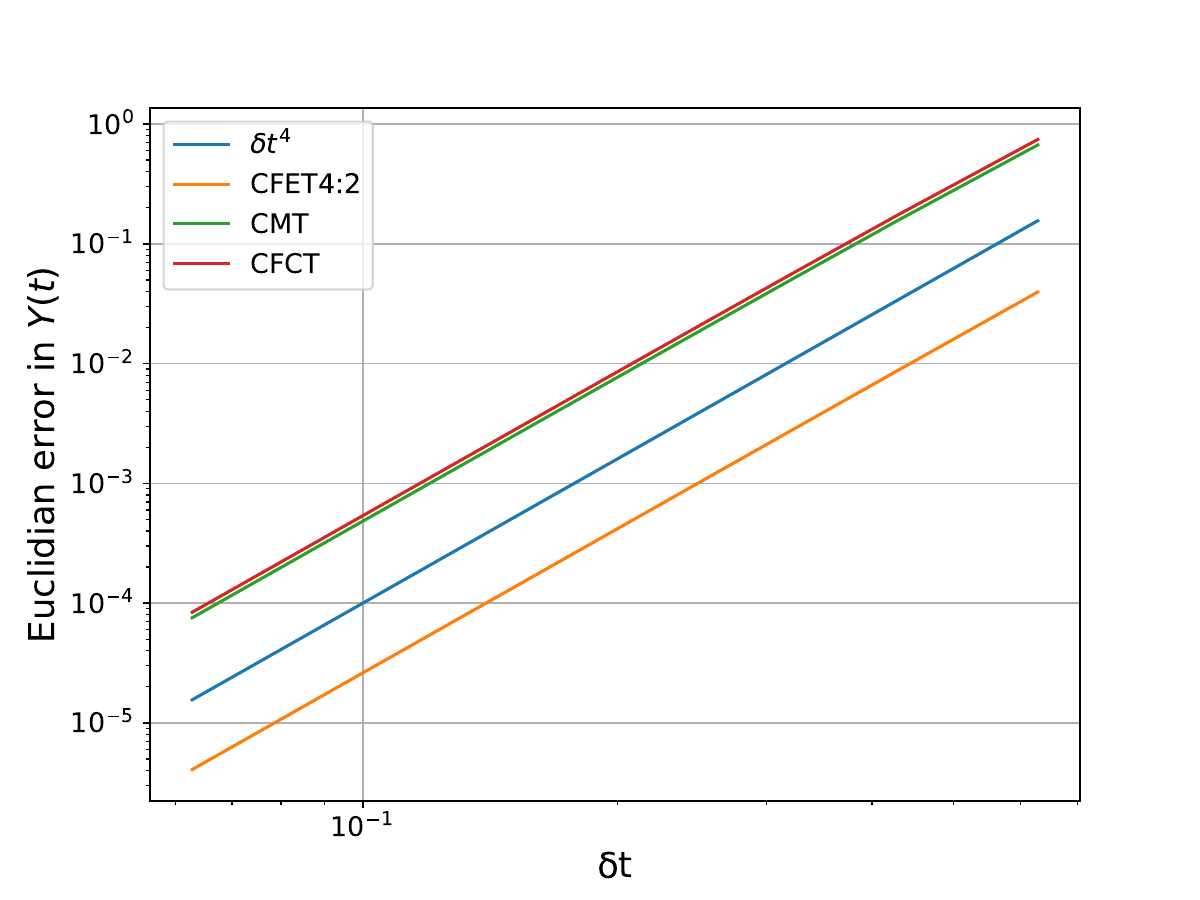}
\end{minipage}
\hspace{0.2cm}
\begin{minipage}{0.47\textwidth}
\def\sizefig{1.}
\includegraphics[width=\sizefig\textwidth]{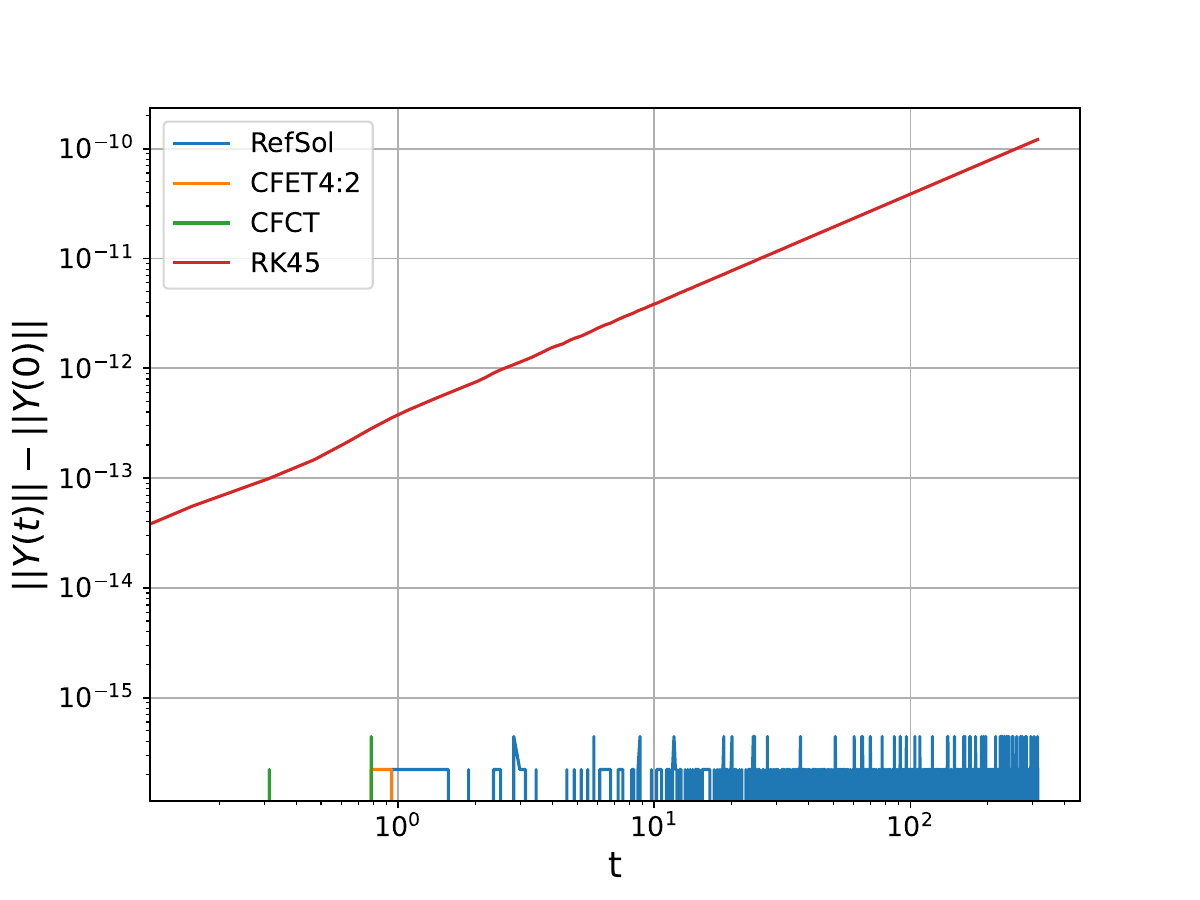}
\includegraphics[width=\sizefig\textwidth]{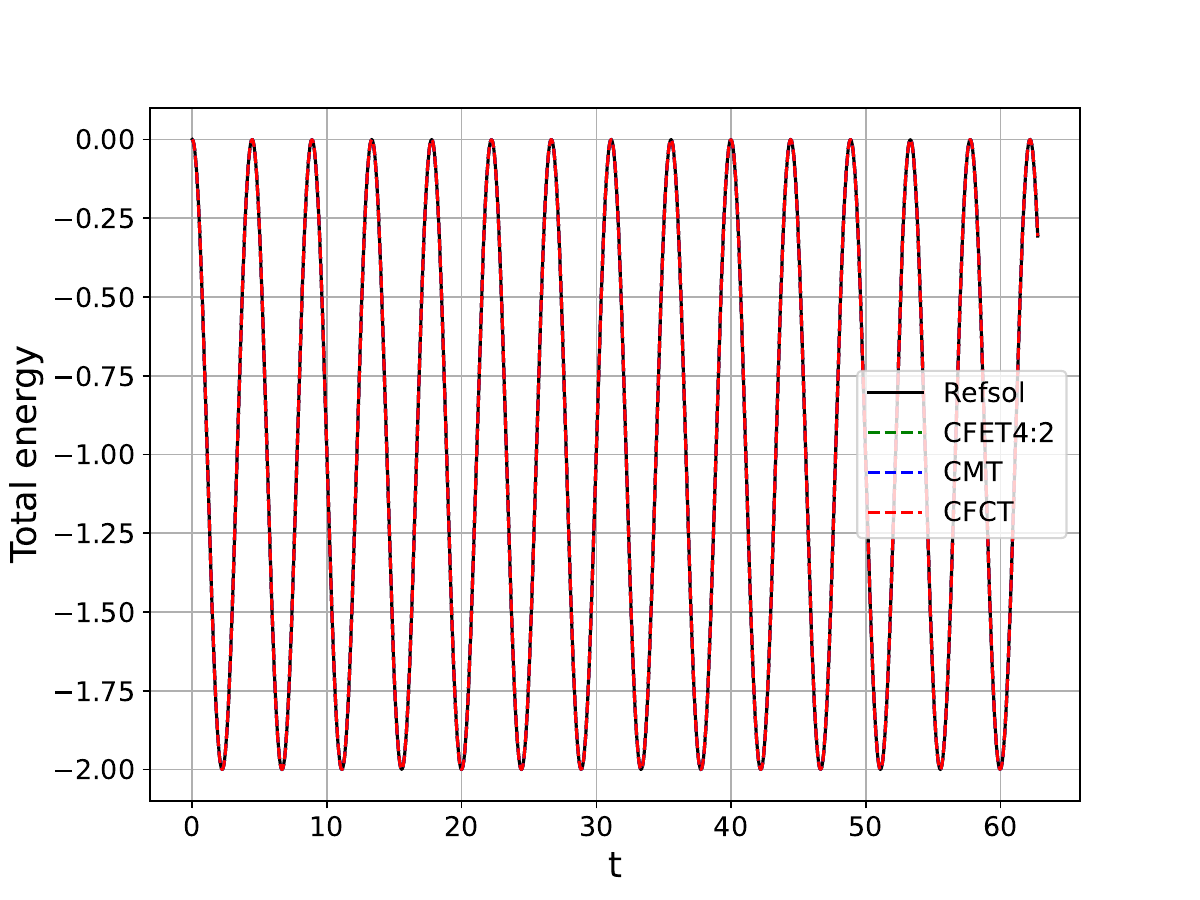}
\end{minipage}
\caption{(Left) Projection of the solution of system \eqref{eq:syst-1} along the first axis, together with error obtained for CFET4:2, CMT, CFCT taking $T = 20\pi/\omega$, $\omega = 1$ and $\Delta = V = 0.5$. (Right) Illustration of the norm conservation during the propagation for CFET4:2, CMT and CFCT. We can clearly see the loss of this property by using the classical integrator as RK45 (taking $rtol = atol = 1e^{-12}$)}.
\label{fig:driven-two-level-system-solution}
\end{figure}

\subsection{Linear time-dependent Schrödinger equation}
\label{Ex:2}

We consider the one-dimensional time-dependent Schrödinger equation,
\begin{equation}
\label{eq:SE}
i\partial_t \varphi(x,t) = H(x,t)\varphi(x,t), \quad \quad \varphi(x,0) = \varphi_0(x) \quad \quad t \in [0,T], \quad x \in D = [-L,L],
\end{equation}
where the time-dependent Hamiltonian $H(x,t)$ is in the form
\[
H(x,t) = \partial_x^2 + V(x, t), 
\]
with the potential $V(x,t) = V_0(x) + u(t)x$ containing an external potential (with a fixed control term $u(t)$ in this example). 
In this first case, we consider the internal potential $V_0$ and the laser profile $u(t)$ to be given by
\[
V_0(x) = x^4 - 10 x^2, \quad u(t) = c \sin(\omega t), ~~ c =-10^2, ~~ \omega = 5\pi. 
\]
We consider as initial state a Gaussian wave-packet, 
\[
\varphi_0(x) = \mathrm{e}^{-\frac{(x-\bar x_0})^2}{2\sigma^2}, \quad \sigma = 0.5, \quad {\bar x_0} = -2.
\]
Following spatial discretisation, we have to solve the following system of ODEs,
\[
\partial_t \varphi(t) = A(t)\varphi(t), \quad \varphi(0) = \varphi_0, \quad A(t) = -i H(t),
\]
where $H(t)$ is now a matrix representation of the Hamiltonian. Specifically, we use a Fourier spectral discretization on an equispaced grid $-L = x_0, \cdots, x_N = L$, after imposing periodic boundary conditions. {For implementation we fix $L=10$, $N = 250$ for space discretization and $M=200$ for time discretization.} 

We implement both CMT (with commutators) and CFCT (with commutator-free) and compare both with the reference solution (which is given here by CFET4:2). 
For a time discretization $0 = t_0, \cdots, t_M = T$, CMT and CFCT respectively read
\[
\varphi^{\mathrm{[CMT]}}_{n+1} = \Cay(A_1(t_n, \delta t) - \frac{1}{6}[A_1(t_n, \delta t), A_2(t_n, \delta t)] - \frac{1}{12} A_1^3(t_n, \delta t)\varphi^{\mathrm{[CMT]}}_n,
\]
\[
\varphi^{\mathrm{[CFCT]}}_{n+1} = \Cay \left(\alpha_{11}A_1 + \alpha_{12}A_2 \right) \, \Cay \left(\alpha_{21}A_1\right) \, \Cay \left(\alpha_{11}A_1 - \alpha_{12}A_2\right) \varphi^{\mathrm{[CFCT]}}_{n},
\]
with $\alpha_{11}, \alpha_{12}, \alpha_{21}, \alpha_{22}, \alpha_{31}, \alpha_{32}$ defined in \eqref{eq:coefs}, where
\[
A_1 = \frac{\delta t}{2}(A^1 + A^2), \quad  A_2 = \frac{\delta t\sqrt{3}}{2}(A^2 - A^1),
\]
and
\[
A^1 = A\left(t_n + \left(\frac{1}{2}-\frac{\sqrt{3}}{6} \right) \delta t\right), \quad A^2 = A\left(t_n + \left(\frac{1}{2}+\frac{\sqrt{3}}{6} \right) \delta t \right).
\]

\paragraph{Numerical simulations.} We check the conservation of the state norm $\norm{\varphi(\cdot,t)}_{L^2(D)}$ for each of these two methods and compute the error with respect to CFET4:2  given in Figure~\ref{fig:SE}. 
In addition, we compute the energy change during propagation, Figure~\ref{fig:SE}. Note that, since we have a time-dependent Hamiltonian, energy is no longer conserved during propagation. In Figure~\ref{fig:SE-energy} one can see the blow up of the energy when considering RK45 scheme.

The implementation of these methods is done using the \textsc{expsolve} package \cite{expsolve}, which is utilized for initializing the Hamiltonian, computing observables such as the energy, and the computation of the $L^2$-norms and inner products (using the \texttt{l2norm} and \texttt{l2inner} methods).

\begin{remark}
{
Although the CFCT method appears less accurate than the CMT approach in this second example, its computational time is more than six times faster. On the other hand, both methods exhibit a lower accuracy than the expected reference slope for $\delta t^4$. This discrepancy can be partly attributed to the choice of the reference solution.
}
\end{remark}

\begin{figure}[ht!]  
\def\sizefig{1.0}
\begin{minipage}{0.475\textwidth}
\includegraphics[width=\sizefig\textwidth]{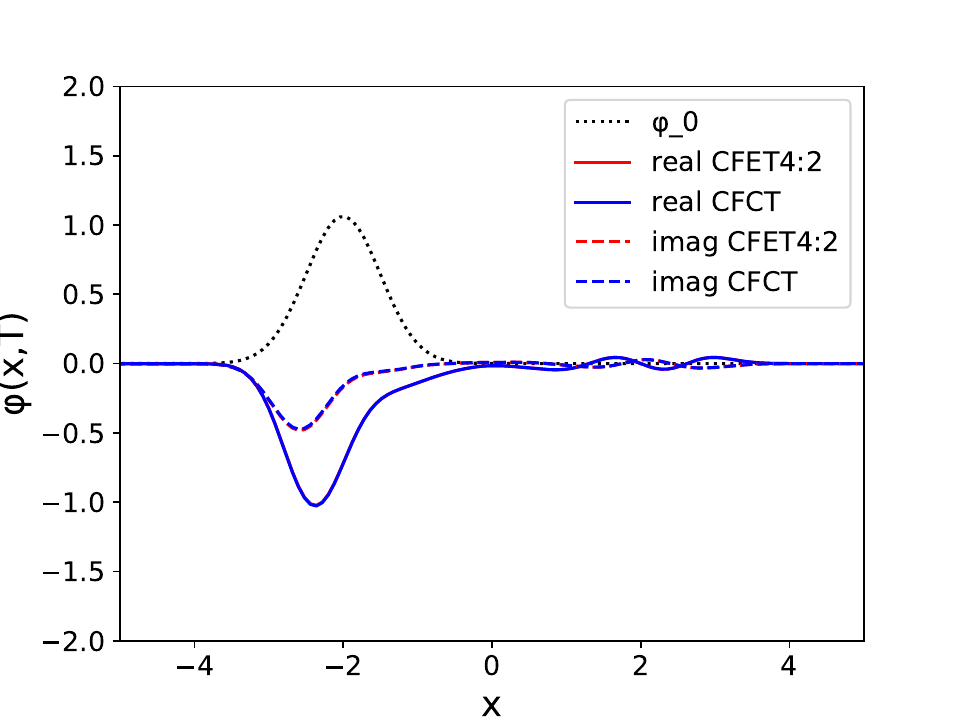}
\includegraphics[width=\sizefig\textwidth]{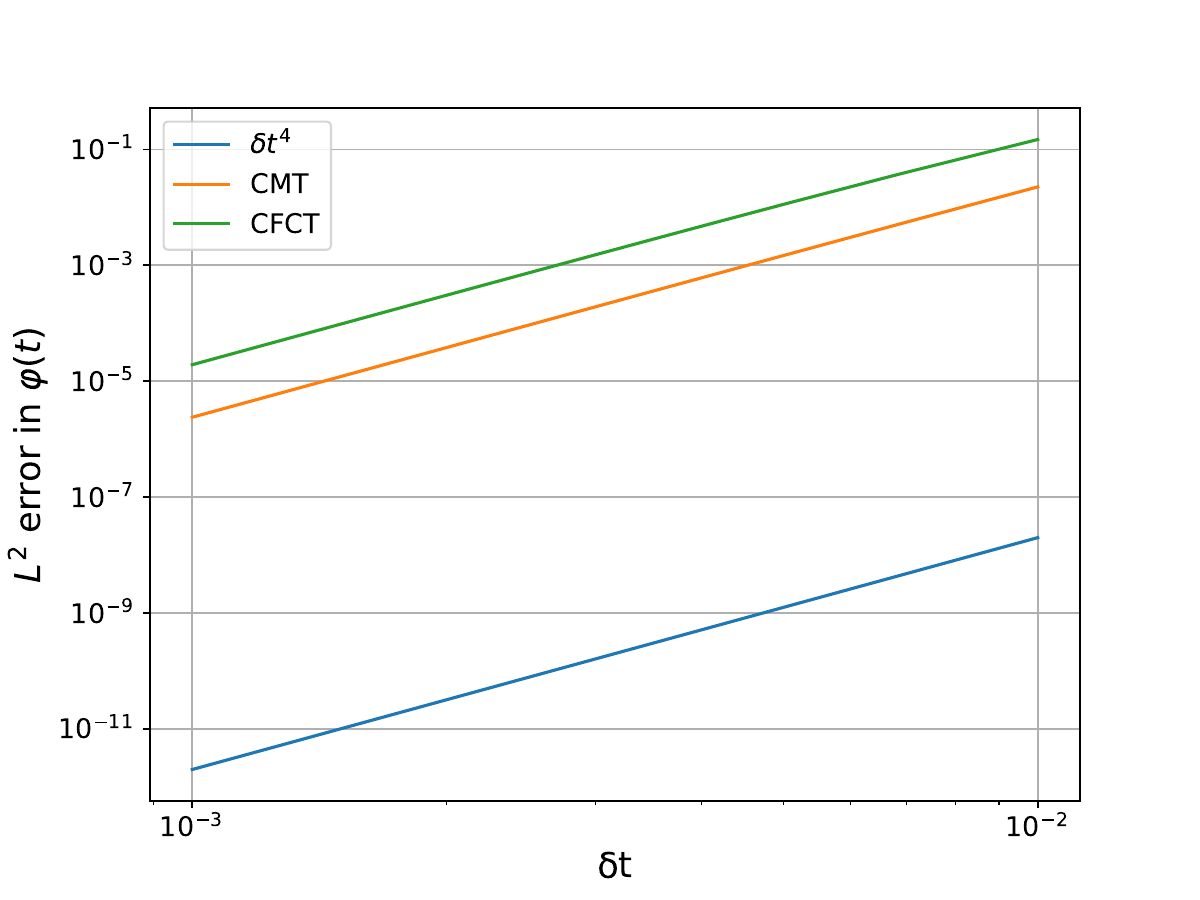}
\end{minipage}
\hspace{0.2cm}
\begin{minipage}{0.475\textwidth}
\includegraphics[width=\sizefig\textwidth]{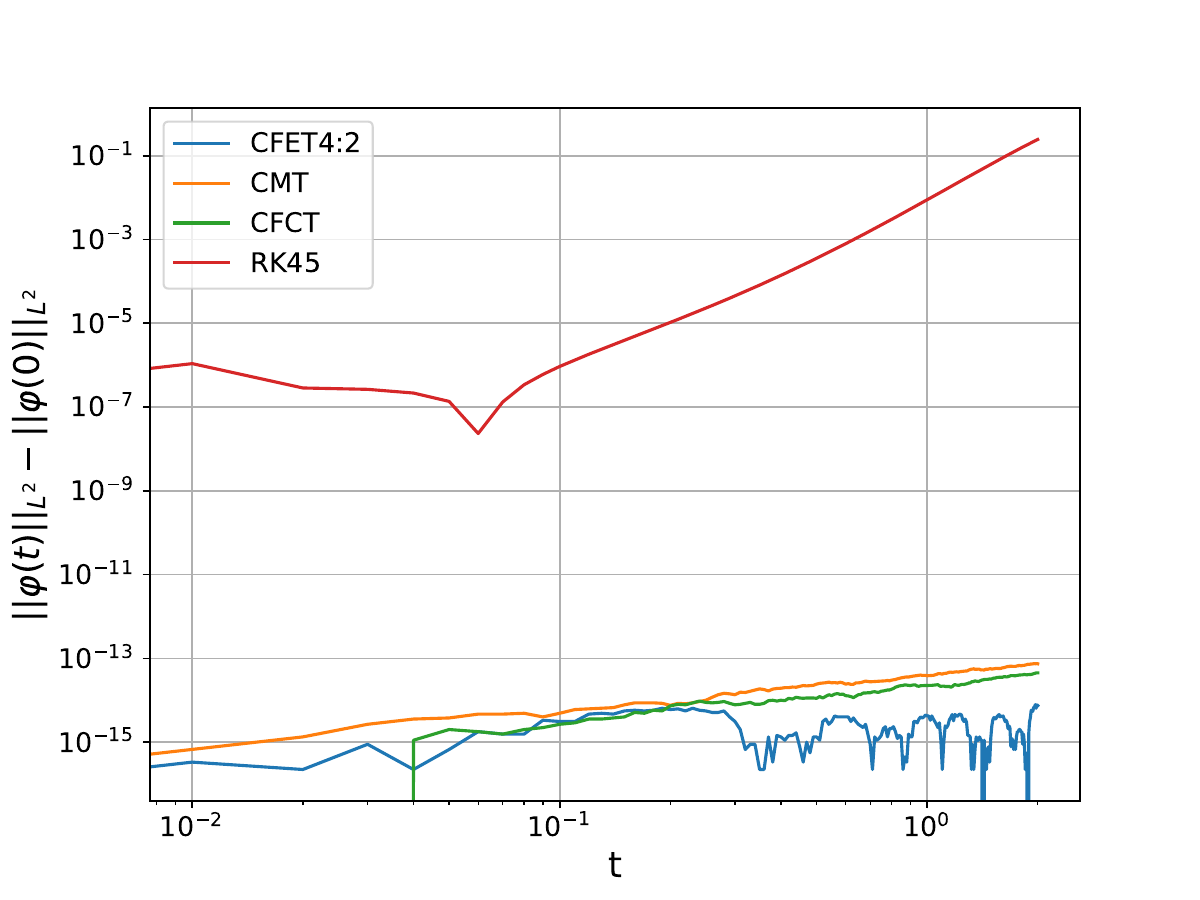}
\includegraphics[width=\sizefig\textwidth]{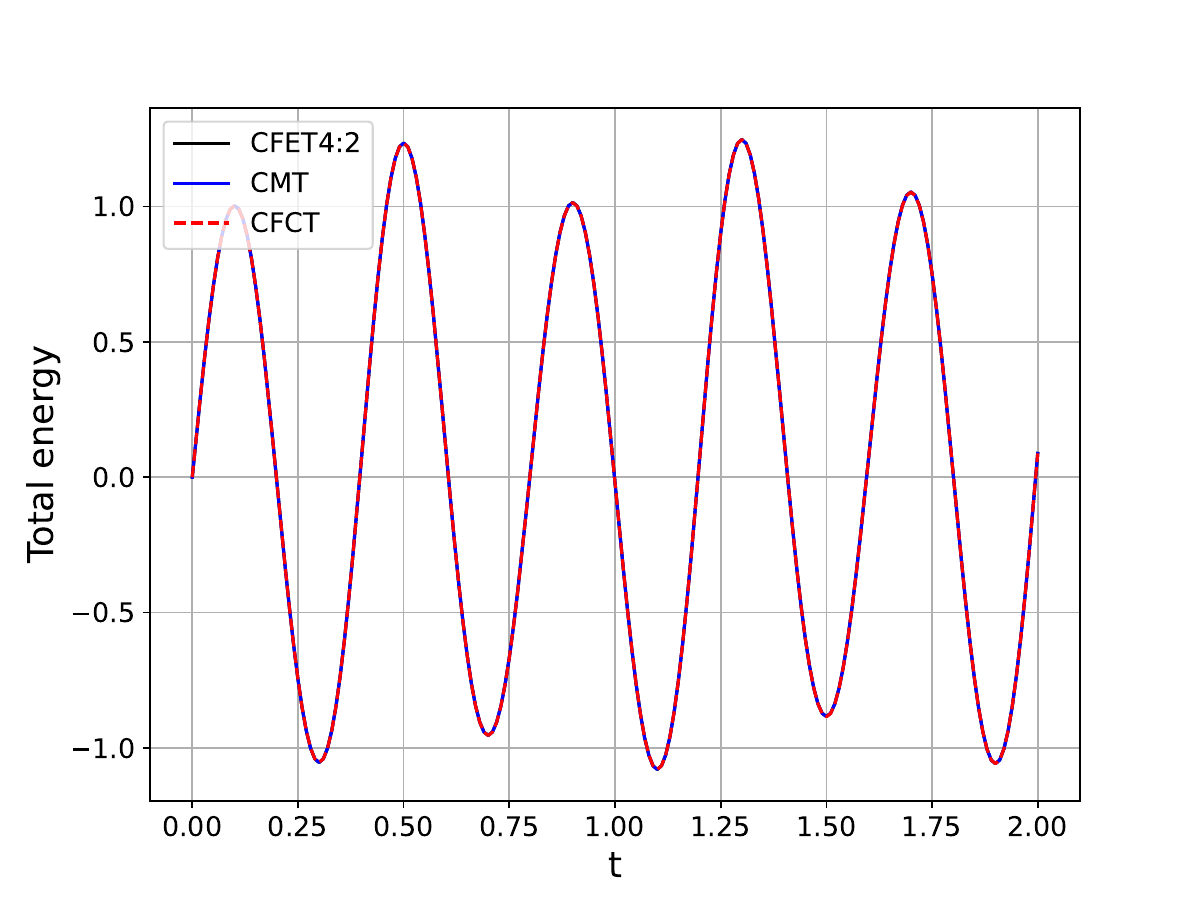}
\end{minipage}
\caption{(Left) Solution of system \eqref{eq:SE} together with error obtained for CMT, CFCT (taking CFET4:2 as reference solution), propagated until $T = 2$. (Right) Illustration of the norm conservation and change of the energy during the propagation for CFET4:2, CMT and CFCT. Again, there is no conservation of the norm along the propagation when using the classical integrator RK45.}
\label{fig:SE}
\end{figure}

\begin{figure}[ht!]  
\centering
\includegraphics[width=0.6\textwidth]{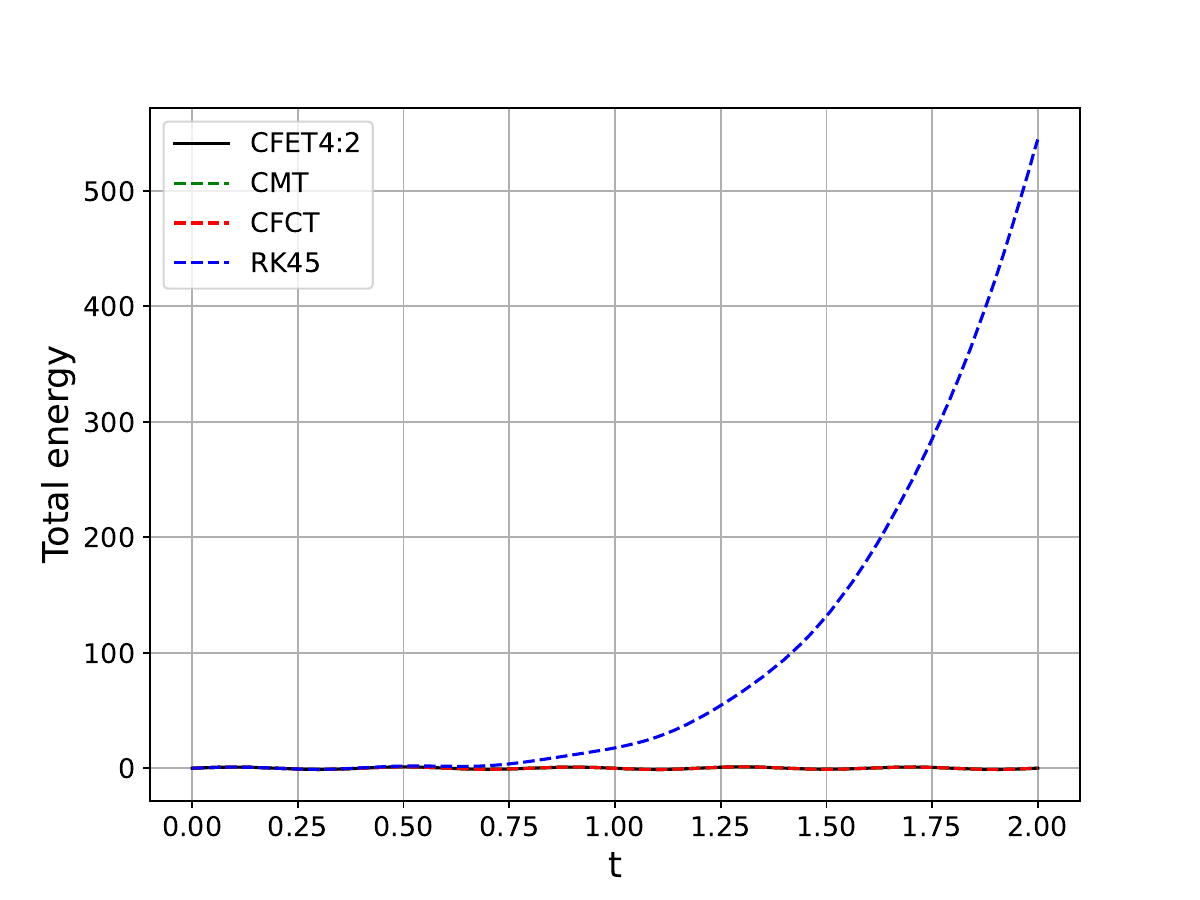}
\caption{Illustration of the energy blow-up when using RK45 to solve example 2.}
\label{fig:SE-energy}
\end{figure}

\section{Conclusion}
In this article, we introduced a new numerical method for solving differential systems evolving on quadratic Lie groups, with particular applications to quantum systems. Our approach combines the Cayley-Magnus expansion with commutator-free techniques. A key intermediate result in the derivation of our method is the formulation of a Cayley-version of the Baker–Campbell–Hausdorff (BCH) formula, which we explicitly computed for up to three matrices.
One of the main advantages of this method lies in the significant computational speed-up it offers, owing to the use of the Cayley transform in place of the matrix exponential. This feature is especially beneficial in quantum optimal control, where optimization algorithms often involve repeated integrations of quantum systems.
In this work, we proposed a method of order four. Looking ahead, we aim to extend our approach to construct high-order methods and to implement these techniques within numerical algorithms tailored for quantum optimal control problems.

\section*{Acknowledgement}

CO acknowledges financial support by the Ministry of Culture and Science of the State of North
Rhine-Westphalia, Germany. SM acknowledges financial support by Deutsche Forschungsgemeinschaft (DFG), Grant No. OB 368/5-1, AOBJ: 692093.
We would also like to thank Arieh Iserles for insightful discussions on his visit to Paderborn University in November 2024.

\appendix
\section{Details on the computation of coefficients} 
\label{app1}

Writing down all the equations in \eqref{eq:BCH-develop} and equalizing them with relation \eqref{eq:simplify-approx} one arrives at the system
\[
\left\{\begin{aligned}
& \alpha_{11} + \alpha_{21} + \alpha_{31} = 1 \\
& \alpha_{12} + \alpha_{22} + \alpha_{32} = 0 \\
& \alpha_{11}\alpha_{22} + \alpha_{11}\alpha_{32} + \alpha_{21}\alpha_{32} - \alpha_{12}\alpha_{21} - \alpha_{12}\alpha_{31} - \alpha_{22}\alpha_{31} = - \frac{1}{3} \\
& 2\alpha_{11}\alpha_{21}\alpha_{31} + \alpha_{11}^2\alpha_{21} + \alpha_{11}\alpha_{21}^2 + \alpha_{11}^2\alpha_{31} + \alpha_{11}\alpha_{31}^2 + \alpha_{21}^2\alpha_{31} + \alpha_{21}\alpha_{31}^2 = \frac{1}{3} \\
& 2\alpha_{11}\alpha_{22}\alpha_{31} + \alpha_{11}\alpha_{12}\alpha_{21} + \alpha_{11}\alpha_{12}\alpha_{31} + \alpha_{11}\alpha_{21}\alpha_{22} + \alpha_{11}\alpha_{31}\alpha_{32} \\ 
& \hspace{7.cm} + \alpha_{21}\alpha_{22}\alpha_{31} + \alpha_{21}\alpha_{31}\alpha_{32}  = 0 \\
& 2\alpha_{11}\alpha_{21}\alpha_{32} + \alpha_{11}^2\alpha_{22} + \alpha_{11}^2\alpha_{32} + \alpha_{12}\alpha_{21}^2 + \alpha_{21}^2\alpha_{32} + \alpha_{11}\alpha_{31}^2 + \alpha_{22}\alpha_{31}^2 \\ 
& \hspace{6.7cm} + 2\alpha_{12}\alpha_{21}\alpha_{31} - 2\alpha_{11}\alpha_{22}\alpha_{31} = 0, \\
\end{aligned}
\right.
\]
We solved this system using  "Maple solver" for real-valued coefficients $\alpha_{11}$, $\alpha_{12}$, $\alpha_{21},$ $\alpha_{22},$ $\alpha_{31},$ $\alpha_{32},$ satisfying the required conditions given in Section~\ref{sec:Cayley-commutator-free}.

\section{Example \ref{Ex:2} with a different quadrature rule}
\label{App2}
To validate our assertion in Remark~\ref{remark:quadrature}, we repeat the example \ref{Ex:2} considering now the following quadrature: 
\begin{equation}
\label{eq:quadrature-rule}
\int_a^{a+\delta t} f(t) \diff t \approx W(f) = \frac{1}{3} \left(2 f\left(a + \frac{\delta t}{4}\right) - f\left(a + \frac{\delta t}{2}\right) + 2 f\left(a + \frac{3\delta t}{4}\right) \right).
\end{equation}
In Fig. B.4 it can be observed that the order of the scheme is preserved 

\begin{figure}[ht!]
\centering
\def\sizefig{0.39}
\includegraphics[width=\sizefig\textwidth]{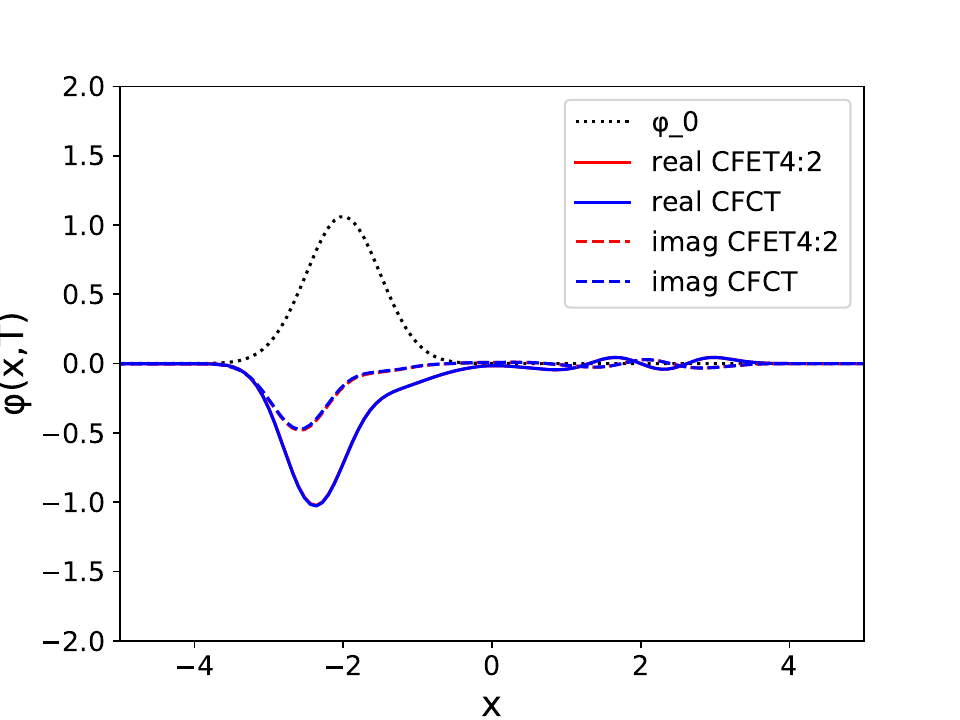}
\hspace{0.5cm}
\includegraphics[width=\sizefig\textwidth]{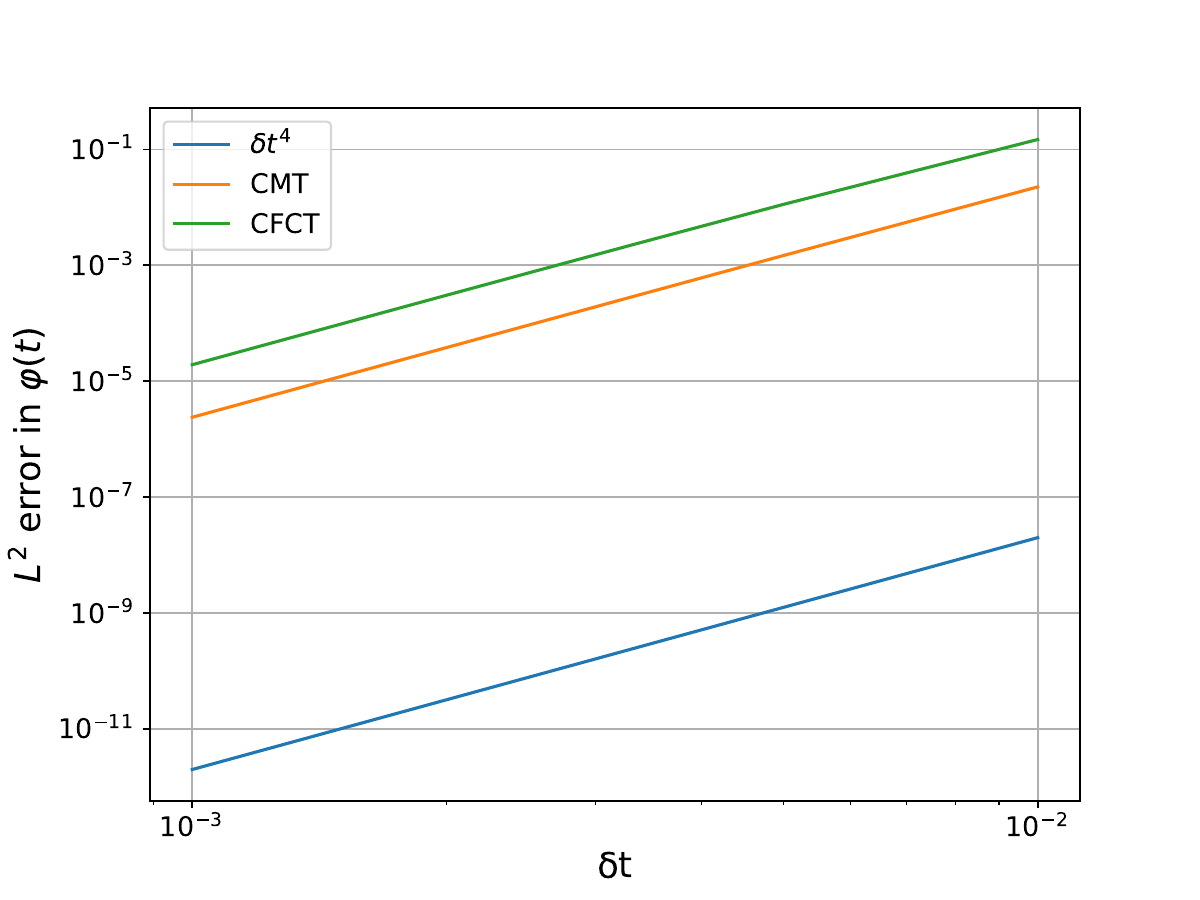}
\vspace{-0.3cm}
\caption{Solution of system (26) together with the error obtained for CMT, CFCT (taking
CFET4:2 as reference solution), with T = 2, using the quadrature rule defined in eq. (B.1).}
\label{fig:SE-new}
\end{figure}

\section{Details on the proof of Proposition \ref{prop:Cayley-BCH}}
\label{App3}

Recall that one has
\[
\begin{aligned}
\Om(A,B,C) = -2 \left(I - \left(I - \frac{A}{2} \right)^{-1} \left(I + \frac{A}{2} \right)\left(I - \frac{B}{2} \right)^{-1} \left(I + \frac{B}{2} \right) \cdot \right. & \\
\hspace{1.0cm} \left. \left(I - \frac{C}{2} \right)^{-1} \left(I + \frac{C}{2} \right) \right)  \cdot \left(I + \left(I - \frac{A}{2} \right)^{-1} \left(I + \frac{A}{2} \right) \cdot \right. & \\
\left. \left(I - \frac{B}{2} \right)^{-1} \left(I + \frac{B}{2} \right) \left(I - \frac{C}{2} \right)^{-1} \left(I + \frac{C}{2} \right) \right)^{-1}. &
\end{aligned}
\]
Taylor expansion of this relation gives
\[
\begin{aligned}
\Om &=  -2 \left(I - \left(I + \frac{A}{2} + \frac{A^2}{4} + \frac{A^3}{8} \right) \left(I + \frac{A}{2} \right) \left(\frac{B}{2} + \frac{B^2}{4} + \frac{B^3}{8} \right) \left(I + \frac{B}{2} \right) \right. \\
&\quad  \left. \left(\frac{C}{2} + \frac{C^2}{4} + \frac{C^3}{8} \right) \left(I + \frac{C}{2} \right) \right) \left(I + \left(I + \frac{A}{2} + \frac{A^2}{4} + \frac{A^3}{8} \right) \left(I + \frac{A}{2} \right) \right. \\
&\quad  \left. \left(\frac{B}{2} + \frac{B^2}{4} + \frac{B^3}{8} \right) \left(I + \frac{B}{2} \right) \left(\frac{C}{2} + \frac{C^2}{4} + \frac{C^3}{8} \right) \left(I + \frac{C}{2} \right) \right)^{-1} + F(A,B,C) \\
&= -2 \left(I - \left(I + A + \frac{A^2}{2} + \frac{A^3}{4} \right) \left(I + B + \frac{B^2}{2} + \frac{B^3}{4} \right) \left(I + C + \frac{C^2}{2} + \frac{C^3}{4} \right) \right) \times \\
&  \left(I + \left(I + A + \frac{A^2}{2} + \frac{A^3}{4} \right) \left(I + B + \frac{B^2}{2} + \frac{B^3}{4} \right) \left(I + C + \frac{C^2}{2} + \frac{C^3}{4} \right) \right)^{-1} + F(A,B,C) \\
&= \left(A + B + C + AB + AC + BC + \frac{A^2}{2} + \frac{B^2}{2} + \frac{C^2}{2} + \frac{A^3}{4} + \frac{B^3}{4} + \frac{C^3}{4} + \frac{A^2B}{2} + \frac{AB^2}{2} \right. \\
& \quad \left. + \frac{A^2C}{2} + \frac{AC^2}{2} + \frac{B^2C}{2} + \frac{BC^2}{2} \right) \left(I + \left(\frac{A}{2} + \frac{B}{2} + \frac{C}{2} + \frac{AB}{2} + \frac{AC}{2} + \frac{BC}{2} + \frac{A^2}{4} + \frac{B^2}{4}  \right.\right. \\
& \left.\left. + \frac{C^2}{4} + \frac{A^2B}{4} + \frac{AB^2}{4} + \frac{A^2C}{4} + \frac{AC^2}{4} + \frac{B^2C}{4} + \frac{BC^2}{4} + \frac{A^3}{8} + \frac{B^3}{8} + \frac{C^3}{8} \right)\right)^{-1} \hspace{-0.4cm} + F(A,B,C) \\
&= \left(A + B + C + AB + AC + BC + \frac{A^2}{2} + \frac{B^2}{2} + \frac{C^2}{2} + \frac{A^3}{4} + \frac{B^3}{4} + \frac{C^3}{4} + \frac{A^2B}{2} + \frac{AB^2}{2} \right. \\
& \quad \left. + \frac{A^2C}{2} + \frac{AC^2}{2} + \frac{B^2C}{2} + \frac{BC^2}{2} \right) \left( I - \frac{A}{2} - \frac{B}{2} - \frac{C}{2} - \frac{AB}{2} - \frac{AC}{2} - \frac{BC}{2} - \frac{A^2}{4} - \frac{B^2}{4} \right. \\
&\quad \left. - \frac{C^2}{4} + \left(\frac{A}{2} + \frac{B}{2} + \frac{C}{2} + \frac{AB}{2} + \frac{AC}{2} + \frac{BC}{2} + \frac{A^2}{4} + \frac{B^2}{4} + \frac{C^2}{4} \right)^2 \right) + F(A,B,C) 
\end{aligned}
\]
\[
\begin{aligned}
&= \left(A + B + C + AB + AC + BC + \frac{A^2}{2} + \frac{B^2}{2} + \frac{C^2}{2} + \frac{A^3}{4} + \frac{B^3}{4} + \frac{C^3}{4} + \frac{A^2B}{2} + \frac{AB^2}{2} \right. \\
& \quad \left. + \frac{A^2C}{2} + \frac{AC^2}{2} + \frac{B^2C}{2} + \frac{BC^2}{2} \right) \left( I - \frac{A}{2} - \frac{B}{2} - \frac{C}{2} - \frac{AB}{4} - \frac{AC}{4} - \frac{BC}{4} + \frac{BA}{4} \right. \\
&\quad \left. + \frac{CA}{4} + \frac{CB}{4} \right) + F(A,B,C) \\
&= A + B + C + \frac{1}{2}\left([A,B] + [A,C] + [B,C] \right) + \frac{1}{4}[A,B],C] - \frac{1}{4}(ACB + BCA) \\
&\quad\quad  - \frac{1}{4}(ABA + BAB + BAB + BCB + CAC + CBC) + F(A,B,C),
\end{aligned}
\]   
with the terms in $F(A,B,C)$ being of at least fourth-order. Since we assumed that $A,B$ and $C$ are in the neighborhood of zero, the series converges.} Moreover, the Neumann series for $(I-A/2)^{-1}$ converges so that the term \\ $\left( I + \Cay(A) \Cay(B) \Cay(C)\right)^{-1}$ becomes:
\[
\begin{aligned}
& \left(2I + \left(A + B + C + AB + AC + BC + \frac{A^2}{2} + \frac{B^2}{2} + \frac{C^2}{2} \right.\right. \\
& \left.\left.  + \frac{A^2B}{2} + \frac{AB^2}{2} + \frac{A^2C}{2} + \frac{AC^2}{2} + \frac{B^2C}{2} + \frac{BC^2}{2} + \frac{A^3}{4} + \frac{B^3}{4} + \frac{C^3}{4} \right) + \widetilde F(A,B,C) \right)^{-1}  \\
=& \frac{1}{2}\left(I + \frac{1}{2}\left(A + B + C + AB + AC + BC + \frac{A^2}{2} + \frac{B^2}{2} + \frac{C^2}{2} \right.\right. \\
& \left.\left.  + \frac{A^2B}{2} + \frac{AB^2}{2} + \frac{A^2C}{2} + \frac{AC^2}{2} + \frac{B^2C}{2} + \frac{BC^2}{2} + \frac{A^3}{4} + \frac{B^3}{4} + \frac{C^3}{4} \right) + \widetilde F(A,B,C) \right)^{-1},    
\end{aligned}
\]
where $\widetilde F(A,B,C)$ contains terms related to truncated terms of the series expansion of $(I- \frac A2)^{-1}$, $(I- \frac B2)^{-1}$, and $(I- \frac C2)^{-1}$. Once again, these converge and $\widetilde F(A,B,C)$ is small. Thus, $\frac 12 (I + \Cay(A) \Cay(B) \Cay(C))$ is a small perturbation of the identity such that the Neumann series to $\left( I + \Cay(A) \Cay(B) \Cay(C)\right)^{-1}$ converges.

\bibliographystyle{siamplain}

\end{document}